\numberwithin{equation}{subsection}
\newtheorem{lmm}[subsection]{Lemma}
\newtheorem{thm}[subsection]{Theorem}
\newtheorem{prop}[subsection]{Proposition}
\newtheorem{cor}[subsection]{Corollary}
\newtheorem*{theorem-non}{Theorem}
\newtheorem*{theorem-nonf}{Th\'eor\`eme}
\newtheorem*{corollary-non}{Corollary}
\newtheorem*{corollary-nonf}{Corollaire}
\theoremstyle{definition}
\newtheorem{defn}[subsection]{Definition}
\newtheorem{context}[subsection]{Context}
\newtheorem{ex}[subsection]{Example}
\newtheorem{rmk}[subsection]{Remark}
\newtheorem{notation}[subsection]{Notation}
\newtheorem{construction}[subsection]{Construction}
\title{On the structure of dg categories of relative singularities}
\author{Massimo Pippi}
\date{}
\newcommand{\ital}[1]{\textit{#1}}
\newcommand{\oo}{\infty}
\newcommand{\aff}[2]{\mathbb{A}^{#1}_{#2}}
\newcommand{\sch}[0]{$\textup{\textbf{Sch}}_{S}$}
\newcommand{\dbarrow}[4]{
\begindc{\commdiag}[30]
\obj(0,1)[]{$#1$}
\obj(35,1)[]{$#2$}
\mor(6,0)(29,0){$#3$}[\atright,\solidarrow]
\mor(29,2)(6,2){$#4$}[\atright,\solidarrow]
\enddc
}
\newcommand{\lgm}[1]{\textup{LG}_{S}(#1)}
\newcommand{\dgcat}{\textup{dgCat}_S}
\newcommand{\dgcatlf}{\textup{dgCat}^{\textup{lf}}_S}
\newcommand{\dgcatdk}{\textup{\textbf{dgCat}}_S}
\newcommand{\dgcatm}{\textup{\textbf{dgCat}}^{\textup{idm}}_S}
\newcommand{\dgcatlfo}{\textup{dgCat}^{\textup{lf},\otimes}_S}
\newcommand{\dgcatdko}{\textup{\textbf{dgCat}}^{\otimes}_S}
\newcommand{\dgcatmo}{\textup{\textbf{dgCat}}^{\textup{idm},\otimes}_S}
\newcommand{\dgcatft}{\textup{\textbf{dgCat}}_S^{\textup{ft}}}
\newcommand{\cohb}[1]{\textup{\textbf{Coh}}^{b}(#1)}
\newcommand{\perf}[1]{\textup{\textbf{Perf}}(#1)}
\newcommand{\qcoh}[1]{\textup{\textbf{QCoh}}(#1)}
\newcommand{\cohm}[1]{\textup{\textbf{Coh}}^{-}(#1)}
\newcommand{\cohbp}[2]{\textup{\textbf{Coh}}^b(#1)_{\textup{\textbf{Perf}}(#2)}}
\newcommand{\cohmp}[2]{\textup{\textbf{Coh}}^{-}(#1)_{\textup{\textbf{Perf}}(#2)}}
\newcommand{\sing}[1]{\textup{\textbf{Sing}}(#1)}
\newcommand{\pdgcat}{\textup{Pairs-dgCat}^{\textup{lf}}_S}
\newcommand{\pdgcato}{\textup{Pairs-dgCat}^{\textup{lf},\otimes}_S}
\newcommand{\mf}[2]{\textup{\textbf{MF}}(#1,#2)}
\begin{document}
\maketitle

\begin{abstract}

In this paper we show that every object in the dg category of relative singularities $\sing{B,\underline{f}}$ associated to a pair $(B,\underline{f})$, where $B$ is a ring and $\underline{f}\in B^n$, is equivalent to an homotopy retract of a $K(B,\underline{f})$-dg module concentrated in $n+1$ degrees, where $K(B,\underline{f})$ denotes the Koszul algebra associated to $(B,\underline{f})$. When $n=1$, we show that Orlov's comparison theorem, which relates the dg category of relative singularities and that of matrix factorizations of an LG-model, holds true without any regularity assumption on the potential.
\end{abstract}
\tableofcontents

\section*{Introduction}
Let $A$ be a Noetherian (commutative) ring. A matrix factorization of a pair $(B,f)$, where $B$ is an $A$-algebra and $f\in B$, is the datum of two projective finitely generated $B$-modules $(E_0,E_1)$ together with two $B$-linear morphisms $d_0:E_0\rightarrow E_1$, $d_1:E_1\rightarrow E_0$ such that $d_1\circ d_0=f\cdot id_{E_0}$ and $d_0\circ d_1=f\cdot id_{E_1}$. These objects, introduced by D. Eisenbud in \cite{eis80}, can be organized in a $2$-periodic dg category $\mf{B}{f}$ in a natural way. On the other hand, given such a pair $(B,f)$, we can define another dg category $\sing{B,f}$, called the dg category of relative singularities of the pair. The pushforward along the inclusion $\mathfrak{i}:Spec(B)\times^h_{\aff{1}{S}}S\rightarrow Spec(B)$ induces a dg functor
\begin{equation*}
    \mathfrak{i}_*:\sing{Spec(B)\times^h_{\aff{1}{S}}S}\rightarrow \sing{Spec(B)},
\end{equation*}
where $\sing{Z}$ stands for $\cohb{Z}/\perf{Z}$, $-\times_{\aff{1}{S}}^h-$ for the derived fiber product and $S=Spec(A)$. Then $\sing{B,f}$ is defined as the kernel of this dg functor.
The connection between dg categories of relative singularities and dg categories of matrix factorizations has been first envisioned by R.O. Buchweitz and D. Orlov (see \cite{buch} and \cite{orl04}), who showed that if $B$ is a regular ring and $f$ is a regular function, then (the homotopy categories of) $\mf{B}{f}$ and $\sing{B,f}$ are equivalent. Notice that under these hypothesis $Spec(B)\times^h_{\aff{1}{S}}S=Spec(B/f)=Spec(B)\times_{\aff{1}{S}}S$ (underived fiber product) and $\sing{B,f}\simeq \sing{B/f}$\footnote{Indeed, if $X$ is an underived (Noetherian) scheme, $\sing{X}=0$ if and only if $X$ is regular.}. The dg category of relative singularities was introduced by J.~Burke and M.~Walker in \cite{bw12} and by A.~Efimov and L.~Positselski in \cite{efpo15} in order to remove the regularity hypothesis on $B$. 

In the recent paper \cite{brtv} the authors show, along the way, that these equivalences are part of a lax monoidal $\oo$-natural transformation
\begin{equation*}
    Orl^{-1,\otimes} : \sing{\bullet,\bullet} \rightarrow \mf{\bullet}{\bullet} : \lgm{1}^{op,\boxplus}\rightarrow \dgcatmo
\end{equation*}
and suggest that, in order to remove the regularity hypothesis on $f$, one should consider the derived zero locus $Spec(B)\times^h_{\aff{1}{S}}S$ instead of the classical one (see \cite[Remarks 2.50 and 2.51]{brtv}). This remark comes from the observation that if $f$ is regular the two notions coincide and that if $B$ is regular and $f=0$, one can compute that both $\mf{B}{0}$ and $\sing{B,0}\simeq \sing{Spec(B)\times^h_{\aff{1}{S}}S}$ are equivalent to $\perf{B[u,u^{-1}]}$\footnote{For the equivalence $\sing{B,0}\simeq \perf{B[u,u^{-1}]}$ see \cite[Proposition 2.45]{brtv} and/or \cite{pr11}.}, where $u$ sits in cohomological degree $2$, while the classical zero locus of $f$ coincides with $B$ and thus the associated dg category of singularities is zero.

More generally, one can consider the dg category of relative singularities of any pair $(B,\underline{f})$, where $\underline{f}\in B^n$ with $n\geq 1$, defined analogously to the case where $n=1$:
\begin{equation}
    \sing{B,\underline{f}}:=fiber\bigl ( \mathfrak{i}_*: \sing{Spec(B)\times^h_{\aff{n}{S}}S}\rightarrow \sing{Spec(B)}\bigr ).
\end{equation}
There exists an algorithm that shows that this dg category is built up from $K(B,\underline{f})$-dg modules concentrated in $n+1$ degrees, where $K(B,\underline{f})$ is the Koszul algebra associated to $(B,\underline{f})$:
\begin{theorem-non}{\textup{(\ref{structure theorem})}}
Let $(Spec(B),\underline{f})$ be a $n$-dimensional affine Landau-Ginzburg model  over $S$. Then every object in the dg category of relative singularities $\sing{B,\underline{f}}$ is an homotopy retract of an object represented by a $K(B,\underline{f})$-dg module concentrated in $n+1$ degrees.
\end{theorem-non}
Moreover, when $n=1$, the algorithm mentioned above can be used to show that
\begin{theorem-non}{\textup{(\ref{structure theorem n=1})}}
Let
\begin{equation*}
\begindc{\commdiag}[20]
\obj(-10,0)[]{$(E,d,h)= 0$}
\obj(30,0)[]{$E_m$}
\obj(60,0)[]{$E_{m+1}$}
\obj(90,0)[]{$\dots$}
\obj(120,0)[]{$E_{m'-1}$}
\obj(150,0)[]{$E_{m'}$}
\obj(180,0)[]{$0$}
\mor(3,0)(27,0){$$}
\mor(33,-1)(57,-1){$d_{m}$}[\atright, \solidarrow]
\mor(57,1)(33,1){$h_{m+1}$}[\atright, \solidarrow]
\mor(63,-1)(87,-1){$d_{m+1}$}[\atright, \solidarrow]
\mor(87,1)(63,1){$$}[\atright, \solidarrow]
\mor(93,-1)(117,-1){$$}[\atright, \solidarrow]
\mor(117,1)(93,1){$h_{m'-1}$}[\atright, \solidarrow]
\mor(123,-1)(147,-1){$d_{m'-1}$}[\atright, \solidarrow]
\mor(147,1)(123,1){$h_{m'}$}[\atright, \solidarrow]
\mor(153,0)(177,0){$$}
\enddc
\end{equation*}
be a $K(B,f)$-dg module whose associated complex of $B$-modules is strictly perfect. In other words, $(E,d,h)$ is the datum of a strictly bounded complex of projective $B$-modules $(E,d)$ together with an homotopy $h:E\rightarrow E[-1]$ between the zero endomorphism of $E$ and the one induced by multiplication by $f$, enhancing $(B,d)$ with a natural structure of $K(B,f)$-dg module. In our notation, $d_i$ (resp. $h_i$) denotes the $i^{th}$ component of the differential (resp. of the homotopy). For more details, see the discussion after Remark \ref{rmk model Koszul algebra}. Then the following equivalence holds in $\sing{B,f}$:
\begin{equation*}
(E,d,h)\simeq 
\begindc{\commdiag}[25]
\obj(0,-3)[]{$\underbracket{\bigoplus_{i\in \mathbb{Z}}E_{2i-1}}_{-1}$}
\obj(40,-3)[]{$\underbracket{\bigoplus_{i\in \mathbb{Z}}E_{2i}}_{0}$}
\obj(50,0)[]{$,$}
\mor(8,-1)(32,-1){$d+h$}[\atright, \solidarrow]
\mor(32,1)(8,1){$d+h$}[\atright, \solidarrow]
\enddc
\end{equation*}
where $d$ (resp. $h$) is the sum of the $d_i$'s (resp. $h_i$'s).
Moreover, the equivalence is natural in $(E,d,h)$.
\end{theorem-non}
It is then possible to deduce the following
\begin{corollary-non}{\textup{(\ref{Orlov equivalence})}}
The lax monoidal $\oo$-natural transformation
\begin{equation*}
Orl^{-1,\otimes} : \sing{\bullet,\bullet} \rightarrow \mf{\bullet}{\bullet} : \lgm{1}^{op,\boxplus}\rightarrow \dgcatmo
\end{equation*}
constructed in \cite[\S2.4]{brtv} defines a lax monoidal $\oo$-natural equivalence.
\end{corollary-non}
Recall that, for an affine LG model $(Spec(B),f)$, 
\begin{equation*}
    Orl^{-1}_{(B,f)} : \sing{B,f}\xrightarrow{\simeq} \mf{B}{f}
\end{equation*}
is defined as the dg functor
\begin{equation*}
    \begindc{\commdiag}[20]
    \obj(0,30)[1]{$(E,d,h)$}
    \obj(70,30)[2]{$\bigoplus_{i\in \mathbb{Z}}E_{2i-1}$}
    \obj(120,30)[3]{$\bigoplus_{i\in \mathbb{Z}}E_{2i}$} 
    \mor{1}{2}{$$}[\atright,\aplicationarrow]
    \mor(78,29)(112,29){$d+h$}[\atright,\solidarrow]
    \mor(112,31)(78,31){$d+h$}[\atright,\solidarrow]
    \obj(0,0)[4]{$(E',d',h')$}
    \obj(70,0)[5]{$\bigoplus_{i\in \mathbb{Z}}E'_{2i-1}$}
    \obj(120,0)[6]{$\bigoplus_{i\in \mathbb{Z}}E'_{2i}.$} 
    \mor{4}{5}{$$}[\atright,\aplicationarrow]
    \mor(78,-1)(112,-1){$d'+h'$}[\atright,\solidarrow]
    \mor(112,1)(78,1){$d'+h'$}[\atright,\solidarrow]
    \mor{1}{4}{$\phi$}
    \mor{2}{5}{$\oplus \phi_{2i-1}$}
    \mor{3}{6}{$\oplus \phi_{2i}$}
    \enddc
\end{equation*}

The corollary above improves all the previous results on the equivalence between the dg categories of singularities and the dg category of matrix factorizations as it removes the regularity assumption on the potential. 
\vspace{0.3cm}

\textit{Acknowledgments} This paper's results are part of my PhD project under the supervision of B. To\"en and G. Vezzosi. I would like to thank them both for uncountable many conversations on the subject. I would also like to acknowledge D. Beraldo, F.~D\'eglise, T.~Dyckerhoff, B.~Keller, V. Melani, M. Robalo, S.~Scherotzke and J.~Tapia  for many useful remarks and suggestions.

I would like to thank two anonymous referees who spotted several typos and inaccuracies and provided me with many useful suggestions to improve the exposition of this paper.

This material is based upon work supported by the National Science Foundation under Grant No. DMS-1440140 while the author was in residence at the Mathematical Sciences Research Institute in Berkeley, California, during the Spring 2019 semester.

This project has received funding from the European Research Council (ERC) under the European
Union Horizon 2020 research and innovation programme (grant agreements No. 741501 and No. 725010).

The author is supported by the collaborative research center SFB 1085 Higher Invariants - Interactions between Arithmetic Geometry and Global
Analysis funded by the Deutsche Forschungsgemeinschaft.
\section{Preliminaries}\label{preliminaries}
In this section we will introduce notation and recall some well known facts about the theory of dg categories.

For us, all rings will be commutative with an identity element. Moreover, we will always assume that rings are Noetherian, even when not explicitly mentioned.
\begin{notation}
We fix a base ring $A$. We will refer to its prime spectrum $Spec(A)$ by $S$ and to the category of $S$-schemes of finite type by \sch.
\end{notation}
We will usually identify every ordinary category with its nerve. We will therefore avoid writing $N(\mathcal{C})$ to refer to the nerve of the ordinary category $\mathcal{C}$. 
\subsection*{Reminders on dg categories}
\begin{rmk}
For more details on the theory of dg categories, we invite the reader to consult \cite{ke06}, \cite{to11} and/or \cite{ro14}.
\end{rmk}

Consider the ordinary category $\dgcat$ of small $A$-linear dg categories together with $A$-linear dg functors. Recall that a quasi-equivalence is a dg functor which induces quasi-isomorphisms on the hom complexes and such that the functor induced on the homotopy categories is essentially surjective. A crucial fact in the theory of dg categories is the existence of a cofibrantly generated model category structure on $\dgcat$ whose weak equivalences are exactly quasi-equivalences (see \cite{tab05}). The underlying $\oo$-category of this model category is the $\oo$-localization of $\dgcat$ with respect to the class of quasi-equivalences. We will denote this $\oo$-category by $\dgcatdk$. 

Another crucial class of dg functors is that of Morita equivalences: a dg functor $T\rightarrow T'$ is a Morita equivalence if it induces a quasi-equivalence on the associated derived categories of perfect dg modules. The class of quasi-equivalences is contained in that of Morita equivalences. Therefore, using the theory of Bousfield localizations we can enrich $\dgcat$ with a cofibrantly generated model category structure where weak equivalences are precisely Morita equivalences. The underlying $\oo$-category, that we will label $\dgcatm$, coincides with the $\oo$-localization of $\dgcat$ with respect to Morita equivalences. In particular we have the following couple of composable $\oo$-functors;
\begin{equation}
\dgcat \rightarrow \dgcatdk \rightarrow \dgcatm.
\end{equation}
The $\oo$-category $\dgcatm$ can be identified with the full subcategory of $\dgcatdk$ of dg categories $T$ for which the Yoneda embedding $T\hookrightarrow \hat{T}_c$ is a quasi-equivalence (see \cite[\S4.4]{to11}). These dg categories are called \emph{triangulated} (see \cite{to07}) or \emph{idempotent complete} (see \cite{ro14}). Here, $\hat{T}_c$ stands for the dg category of compact (i.e. perfect) $T^{op}$-modules. In particular, if $T\hookrightarrow \hat{T}_c$ is a quasi-equivalence, it follows that the homotopy category of $T$ is equivalent to the homotopy category of $\hat{T}_c$, which is triangulated and idempotent complete (compact objects are stable under retracts). Then the $\oo$-functor $\dgcatdk \rightarrow \dgcatm$ is a left adjoint to the inclusion $\oo$-functor and can be informally described by the assignement $T\mapsto \hat{T}_c$.

We can enhance both $\dgcatdk$ and $\dgcatm$ with a symmetric monoidal structure in such a way that, if we restrict to the full subcategory $\dgcatlf \subseteq \dgcat$ of locally flat (small) dg categories, there we get two composable symmetric monoidal $\oo$-functors
\begin{equation}
    \dgcatlfo \rightarrow \dgcatdko \rightarrow \dgcatmo.
\end{equation}
For more on Morita theory of dg categories, we refer to \cite{to07}.

Of major relevance in the following is the definition of quotient of dg categories. Given a dg category $T$ together with a full sub dg category $T'$, both of them in $\dgcatm$, we will consider the dg quotient $T/T'$ which is defined as the pushout $T\amalg_{T'}0$ in $\dgcatm$. Here $0$ stands for the final object in $\dgcatm$, i.e. the dg category with only one object and the zero hom complex. More generally, we can define the dg quotient of any morphism $T'\rightarrow T$ in $\dgcatm$ as the pushout above. A fundamental fact is that the homotopy category of $T/T'$ coincides with the Verdier quotient of $T$ by the full subcategory generated by the image of $T'$ (see \cite{dri}). The dg category $T/T'$ can also be obtained as the image in $\dgcatm$ of the pushout $T\amalg_{T'}0$ calculated in $\dgcatdk$.

We conclude this section by recalling that compact objects in $\dgcatm$ coincide with dg categories of finite type over $A$ (see \cite[Definition 2.4]{tv07} for a definition and \cite[Lemma 2.11]{tv07} for a proof of this fact). In particular, as explained for example in \cite[Proposition 6.1.27]{ro14} or in \cite[\S2.1]{tv19},
\begin{equation}
    \textup{Ind}(\dgcatft)\simeq \dgcatm.
\end{equation}
\subsection*{Higher dimensional Landau-Ginzburg models}
\begin{context}
Assume that $A$ is a local, Noetherian regular ring of finite Krull dimension.
\end{context}
Recall that the category of \ital{Landau-Ginzburg models over $S$} is the category of flat $S$-schemes of finite type together with a potential (i.e. a map to $\aff{1}{S}$). The morphisms are those morphisms of $S$-schemes which are compatible with the potential. Moreover, this category has a natural symmetric monoidal enhancement due to the fact that $\aff{1}{S}$ is an abelian group object in the category of $S$-schemes. It is very easy to generalize this category to the case where schemes are provided with multipotentials, i.e. with maps to $\aff{n}{S}$, for any $n\geq 1$.
\begin{defn}
Fix $n \geq 1$. Define the category of \ital{$n$-dimensional Landau-Ginzburg models over S} ($n$-LG models over $S$ for brevity) to be the full subcategory of $\textup{\textbf{Sch}}_{S/\mathbb{A}^{n}_S}$ spanned by those objects
\[
\begindc{\commdiag}[20]
\obj(0,20)[1]{$X$}
\obj(60,20)[2]{$\aff{n}{S}$}
\obj(30,0)[3]{$S$}
\mor{1}{2}{$\underline{f}=(f_1,\dots,f_n)$}
\mor{1}{3}{$p$}[\atright,\solidarrow]
\mor{2}{3}{$\textup{proj.}$}
\enddc
\]
where $p$ is a flat morphism. Denote this category by $\lgm{n}$ and its objects by $(X,\underline{f})$. 

For convenience, we also introduce the following (full) subcategories of $\lgm{n}$:
\begin{itemize}
\item $\lgm{n}^{\text{fl}}$, the category of \textit{flat Landau-Ginzburg models of order n over S}, spanned by those objects $(X,\underline{f})$ such that $\underline{f}$ is flat and by $(S,\underline{0})$;
\item $\lgm{n}^{\text{aff}}$, the category of \textit{affine Landau-Ginzburg models of order n over S}, spanned by those objects $(X,\underline{f})$ such that $X$ is affine;
\item $\lgm{n}^{\text{aff,fl}}$, the category of \textit{flat, affine Landau-Ginzburg models of order n over S}, spanned by those objects $(X,\underline{f})$ such that $X$ is affine and $\underline{f}$ is flat and by $(S,\underline{0})$.
\end{itemize}
\end{defn}
\begin{construction}
As in \cite{brtv}, we can enhance $\lgm{n}$ (and its variants) with a symmetric monoidal structure. Consider the "sum morphism"\footnote{notice that it is flat.}
\begin{equation}
+ : \aff{n}{S}\times_S \aff{n}{S}\rightarrow \aff{n}{S}
\end{equation}
on $\aff{n}{S}$, corresponding to 
\[
A[T_1,\dots,T_n]\rightarrow A[X_1,\dots,X_n]\otimes_A A[Y_1,\dots,Y_n]
\]
\[
T_i\mapsto X_i\otimes 1 + 1 \otimes Y_i \hspace{0.5cm} i=1,\dots,n.
\]
Then define 
\begin{equation}
\boxplus : \lgm{n}\times \lgm{n} \rightarrow \lgm{n}
\end{equation}
by the formula
\[
\bigl ( (X,\underline{f}), (Y,\underline{g})\bigr )\mapsto (X\times_S Y, \underline{f}\boxplus \underline{g}).
\]
Here, $\underline{f}\boxplus \underline{g}$ is the following composition
\[
X\times_S Y\xrightarrow{\underline{f}\times \underline{g}} \aff{n}{S}\times_S \aff{n}{S}\xrightarrow{+} \aff{n}{S}.
\]
Notice that $X\times_S Y$ is still flat over $S$, whence this functor is well defined. It is also easy to remark that $\boxplus$ is associative - i.e. there exist natural isomorphisms $\bigl ( (X,\underline{f})\boxplus (Y,\underline{g}) \bigr) \boxplus (Z,\underline{h})\simeq  (X,\underline{f})\boxplus \bigl ((Y,\underline{g}) \boxplus (Z,\underline{h})\bigr) $ - and that for any object $(X,\underline{f})$, $(S,\underline{0})\boxplus (X,\underline{f})\simeq (X,\underline{f})\simeq (X,\underline{f})\boxplus (S,\underline{0})$. More briefly, $\bigl ( \lgm{n}, \boxplus, (S,\underline{0})\bigr )$ is a symmetric monoidal category. It is not hard to see that this construction works on $\lgm{n}^{\text{fl}}$, $\lgm{n}^{\text{aff}}$ and $\lgm{n}^{\text{aff,fl}}$ too. Indeed,  this is clear for $\lgm{n}^{\text{aff}}$  and if $\underline{f}$ and $\underline{g}$ are flat morphisms, so is $\underline{f}\times \underline{g}$ and therefore $\underline{f}\boxplus \underline{g}$ is a composition of flat morphisms. 
\end{construction}
\begin{notation}
We will denote by $\lgm{n}^{\boxplus}$ (resp. $\lgm{n}^{\text{fl},\boxplus}$, $\lgm{n}^{\text{aff},\boxplus}$, $\lgm{n}^{\text{aff,fl}, \boxplus}$   ) these symmetric monoidal categories.
\end{notation}
\begin{rmk}
Notice that $\lgm{1}^{\boxplus}$ is exactly the symmetric monoidal category
$\textup{LG}_S^{\boxplus}$ defined in \cite[\S 2]{brtv}.
\end{rmk}
\begin{rmk}
Fix $n\geq 1$. Notice that the symmetric group $\mathcal{S}_n$ acts on the category of $n$-LG models over $S$. Indeed, for any $\sigma \in \mathcal{S}_n$ and for any $(X,\underline{f}) \in \lgm{n}$, we can define
\[
\sigma \cdot (X,\underline{f}) := (X,\sigma \cdot \underline{f}).
\]
\end{rmk}

\subsection*{Dg categories of singularities}
It is a classic theorem due to Auslander-Buchsbaum (\cite{ab56}) and Serre (\cite{se55}) that a Noetherian local ring $R$ is regular if and only if it has finite global dimension. This extremely important fact can be rephrased by saying that the every object in $\cohb{R}$ is equivalent to an object in $\perf{R}$. In particular, $R$ is regular if and only if $\cohb{R}/\perf{R}$ is zero. This explains why the quotient above is called \textit{category of singularities}. 

Before going on with the precise definitions, let us fix some notation.

Let $(X,\underline{f})$ be a $n$-LG model over $S$. Then consider the (derived) zero locus of $\underline{f}$, i.e. the (derived) fiber product

\begin{equation}
\begindc{\commdiag}[20]
\obj(0,15)[1]{$X_0$}
\obj(30,15)[2]{$X$}
\obj(0,-15)[3]{$S$}
\obj(30,-15)[4]{$\aff{n}{S}.$}
\mor{1}{2}{$\mathfrak{i}$}
\mor{1}{3}{$$}

\mor{2}{4}{$\underline{f}$}
\mor{3}{4}{$\text{zero}$}
\enddc
\end{equation}

\begin{rmk}
Notice that $X_0\simeq X\times_{\aff{n}{S}}^h S$ coincides with the classical zero locus of $\underline{f}$ whenever $(X,\underline{f})$ belongs to $\lgm{n}^{\text{fl}}$ (except for $(X,\underline{f})=(S,\underline{0})$). In general, we always have a closed embedding $t : X\times_{\aff{n}{S}} S=\pi_0(X_0) \rightarrow X_0$.
\end{rmk}

\begin{rmk}\label{lci morphisms stable pullbacks}
Recall that a morphism of derived schemes $f:Y\rightarrow Z$ is a locally finitely presented if the induced morphism on the truncated schemes $\pi_0(f):\pi_0(Y)\rightarrow \pi_0(Z)$ is locally finitely presented in the classical sense and if the cotangent complex $\mathbb{L}_f$ is perfect. Also recall that $f:Y\rightarrow Z$ is (derived) lci if it is locally finitely presented and if the cotangent complex $\mathbb{L}_f$ has Tor amplitude $[-1,0]$. As all these properties are clearly preserved under derived fiber products, we see that lci morphisms are closed with respect to this operation.
In particular, since $S\xrightarrow{\text{zero}} \aff{n}{S}$ is lci, we get that $\mathfrak{i}: X_0\rightarrow X$ is a lci morphism of derived schemes.

\end{rmk}
We will consider the following ($A$-linear) dg categories:
\begin{itemize}
\item $\qcoh{X}$ (resp. $\qcoh{X_0}$ ), the $A$-linear dg category of complexes of quasi-coherent sheaves on $X$ (resp. $X_0$);
\item $\perf{X}$ (resp. $\perf{X_0}$), the full sub-dg category of $\qcoh{X}$ (resp. $\qcoh{X_0}$) spanned by perfect complexes. Recall that, for a derived scheme $Z$, an object $E\in \qcoh{Z}$ is perfect if, locally, it belongs to the thick sub-dg category of $\qcoh{Z}$ spanned by $\mathcal{O}_Z$. Perfect complexes are exactly dualizable objects. In our case, they coincide with compact objects in $\qcoh{Z}$ too (see \cite{bzfn}); 
\item $\cohb{X}$ (resp $\cohb{X_0}$), the full sub-dg category of $\qcoh{X}$  (resp. $\qcoh{X_0}$ ) spanned by those cohomologically bounded complexes $E$ such that $H^*(E)$ is a coherent $H^0(\mathcal{O}_X)$ (resp. $H^0(\mathcal{O}_{X_0})$) module;
\item $\cohm{X}$ (resp $\cohm{X_0}$), the full sub-dg category of $\qcoh{X}$  (resp. $\qcoh{X_0}$ ) spanned by those cohomologically bounded above complexes with coherent cohomology groups over $H^0(\mathcal{O}_X)$ (resp. $H^0(\mathcal{O}_{X_0})$);
\item $\cohbp{X_0}{X}$, the full sub-dg category of $\cohb{X_0}$ spanned by those objects $E$ such that $\mathfrak{i}_*E$ belongs to $\perf{X}$.
\end{itemize}
\begin{rmk}
Analogously to \cite[\S2]{brtv}, we have the following inclusions
\[
\perf{X}\subseteq \cohb{X} \subseteq \cohm{X}\subseteq \qcoh{X},
\]
\[
\perf{X_0}\subseteq \cohbp{X_0}{X}\subseteq \cohb{X_0}\subseteq \cohm{X_0}\subseteq \qcoh{X_0}.
\]
Indeed, being $X$ and $X_0$ eventually coconnective (see \cite[\S4, Definition 1.1.6]{gr17}), we have the inclusions $\perf{X}\subseteq \cohb{X}$ and $\perf{X_0}\subseteq \cohb{X_0}$. Moreover, as $\mathfrak{i}$ is lci, by \cite{to12}, we have that $\mathfrak{i}_*$ preserves perfect complexes. Thus, the inclusion $\perf{X_0}\subseteq \cohbp{X_0}{X}$ holds.
\end{rmk}
\begin{rmk}
As it is explained in \cite[Remark 2.14]{brtv}, the dg categories $\perf{X}$, $\perf{X_0}$, $\cohb{X}$, $\cohb{X_0}$ and $\cohbp{X_0}{X}$ are idempotent complete. Indeed, the same argument provided in \textit{loc.cit.} for the case $n=1$ works in general.
\end{rmk}
Notice that all the results in \cite[\S 2.3.1]{brtv} are not specific of the monopotential case and they remain valid in our situation. We will recall these statements for the reader's convenience and refer to \ital{loc. cit.} for the proofs, which remain untouched.
\begin{prop}
Let $(X,\underline{f})\in \lgm{n}$. Then the inclusion functor induces an equivalence 
\begin{equation}
\cohbp{X_0}{X}\simeq \cohmp{X_0}{X}.
\end{equation}
In particular, the following square is cartesian in $\dgcatm$
\begin{equation}
\begindc{\commdiag}[20]
\obj(0,30)[1]{$\cohm{X_0}$}
\obj(60,30)[2]{$\cohm{X}$}
\obj(0,0)[3]{$\cohbp{X_0}{X}$}
\obj(60,0)[4]{$\perf{X}.$}
\mor{1}{2}{$\mathfrak{i}_*$}
\mor{3}{1}{$$}
\mor{4}{2}{$$}
\mor{3}{4}{$\mathfrak{i}_*$}
\enddc
\end{equation}
\end{prop}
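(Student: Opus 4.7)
The plan is to reduce both assertions to a single boundedness claim: if $E\in\cohm{X_0}$ satisfies $\mathfrak{i}_*E\in\perf{X}$, then $E$ is already in $\cohb{X_0}$. Once this is in hand, the equivalence $\cohbp{X_0}{X}\simeq\cohmp{X_0}{X}$ is automatic (the inclusion $\cohbp{X_0}{X}\subseteq\cohmp{X_0}{X}$ being tautological). For the cartesian square, I would use that the right vertical inclusion $\perf{X}\hookrightarrow\cohm{X}$ is fully faithful, so its pullback in $\dgcatm$ along $\mathfrak{i}_*:\cohm{X_0}\to\cohm{X}$ is concretely realized as the full sub-dg-category of $\cohm{X_0}$ on those objects whose image lands in $\perf{X}$ up to equivalence, i.e.\ $\cohmp{X_0}{X}$, which by the boundedness claim equals $\cohbp{X_0}{X}$.

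To prove the boundedness claim I would observe that $\mathfrak{i}:X_0\to X$ is a derived affine morphism: locally with $X=\mathrm{Spec}(B)$, we have $X_0=\mathrm{Spec}(K(B,\underline{f}))$, and under the identification $\qcoh{X_0}\simeq K(B,\underline{f})\text{-dgmod}$ the pushforward $\mathfrak{i}_*$ is the forgetful functor to $B$-dg-modules. This forgetful functor preserves the underlying chain complex, so $H^i(\mathfrak{i}_*E)\simeq H^i(E)$ in every degree; coherence of the resulting $B$-module cohomology follows because $\mathfrak{i}$ is proper. Since $\mathfrak{i}_*E\in\perf{X}\subseteq\cohb{X}$ has cohomology concentrated in a bounded range of degrees, the same holds for $E$, and combined with the upper bound inherent in $E\in\cohm{X_0}$ we conclude $E\in\cohb{X_0}$.

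The main obstacle is philosophical rather than technical: one might worry that $\mathfrak{i}_*$ cannot be cohomology-preserving because $\mathfrak{i}_*\mathcal{O}_{X_0}\simeq K(\mathcal{O}_X,\underline{f})$ visibly has non-trivial negative cohomology sheaves. The resolution is that $\mathcal{O}_{X_0}$ itself, viewed in $\qcoh{X_0}$ with the standard $t$-structure, carries precisely the same non-trivial cohomology sheaves, simply because $X_0$ is a genuinely derived intersection rather than an underived scheme. So no contradiction arises: the forgetful functor preserves cohomological amplitudes, even though it does not send the heart of the $t$-structure on $\qcoh{X_0}$ into the heart of the $t$-structure on $\qcoh{X}$. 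Keeping this point straight is the one conceptual subtlety in the argument; after that, both parts of the proposition are essentially formal.
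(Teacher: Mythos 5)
Your argument is correct and is exactly the one the paper implicitly relies on: the paper simply defers to \cite[\S 2.3.1]{brtv}, where the proof is precisely this observation that $\mathfrak{i}_*$ is (locally) the forgetful functor along $B\to K(B,\underline{f})$, hence $t$-exact and conservative, so $\mathfrak{i}_*E\in\perf{X}\subseteq\cohb{X}$ forces $E$ to be cohomologically bounded, and the cartesian square then follows from the standard description of pullbacks along quasi-fully-faithful functors in $\dgcatm$. Your closing remark about $\mathfrak{i}_*\mathcal{O}_{X_0}\simeq K(\mathcal{O}_X,\underline{f})$ correctly identifies and dispels the only apparent tension.
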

We now give definitions for the relevant dg categories of singularities. The reader should be aware that there are plenty of these objects that one can consider, and we will define some of them later on. The following category, known as \ital{category of absolute singularities}, first appeared in \cite{orl04}. The following is a dg enhancement of the original definition, as it appears in \cite{brtv}.
\begin{defn}\label{absolute sing}
Let $Z$ be a derived scheme of finite type over $S$ whose structure sheaf is cohomologically bounded. The \ital{dg category of absolute singularities of $Z$} is the dg quotient (in $\dgcatm$)
\begin{equation}
\sing{Z}:= \cohb{Z}/\perf{Z}.
\end{equation}
\end{defn}
\begin{rmk}
Notice that the finiteness hypothesis on $Z$ in Definition \ref{absolute sing} is absolutely indispensable, as otherwise $\perf{Z}$ may not contained in $\cohb{Z}$.
\end{rmk}
\begin{rmk}
It is well known that, for an \ital{underived} (Noetherian) scheme $Z$, the dg category $\sing{Z}$ is zero if and only if the scheme is regular. On the other hand, when we allow $Z$ to be a derived scheme, $\sing{Z}$ may be non trivial even if the underlying scheme is regular. For example, consider $Z=Spec(A\otimes^{\mathbb{L}}_{A[T]}A)$.
\end{rmk}

Following \cite{brtv} we next consider the dg category of singularities associated to an $n$-dimesional LG-model.
\begin{defn}
Let $(X,\underline{f})\in \lgm{n}$. The \ital{dg category of singularities of $(X,\underline{f})$} is the following fiber in $\dgcatm$
\begin{equation}
\sing{X,\underline{f}}:= Ker\bigl ( \mathfrak{i}_* : \sing{X_0}\rightarrow \sing{X}\bigr ).
\end{equation}
\end{defn}
\begin{rmk}
Notice that $\sing{X,\underline{f}}$ is a full sub-dg category of $\sing{X_0}$ (see \cite[Remark 2.24]{brtv}). Moreover, these two dg categories coincide whenever $X$ is a regular $S$-scheme.
\end{rmk}
\begin{prop}{\textup{(See \cite[Proposition 2.25]{brtv})}}
Let $(X,\underline{f})$ be a $n$-dimensional LG model over $S$. Then there is a canonical equivalence
\begin{equation}
\cohbp{X_0}{X}/\perf{X_0}\simeq \sing{X,\underline{f}},
\end{equation}
where the quotient on the left is taken in $\dgcatm$. 
\end{prop}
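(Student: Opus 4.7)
The plan is to construct a natural dg-functor
\[
\Phi\colon \cohbp{X_0}{X}/\perf{X_0}\longrightarrow \sing{X,\underline{f}}
\]
and then to verify that $\Phi$ is an equivalence in $\dgcatm$ by checking full faithfulness and essential surjectivity at the level of homotopy categories (which suffices since both sides are idempotent complete).

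To produce $\Phi$, I would observe that the inclusion $\cohbp{X_0}{X}\hookrightarrow \cohb{X_0}$ is fully faithful and contains $\perf{X_0}$ as a common sub-dg-category; after taking dg-quotients in $\dgcatm$ it therefore descends to a dg-functor $\bar\iota\colon \cohbp{X_0}{X}/\perf{X_0}\to \sing{X_0}$. By the very definition of $\cohbp{X_0}{X}$, the functor $\mathfrak{i}_*$ sends its objects into $\perf{X}$, which is zero in $\sing{X}$; hence $\mathfrak{i}_*\circ\bar\iota$ is canonically null-homotopic, and the universal property of the fiber $\sing{X,\underline{f}}=\ker(\mathfrak{i}_*)$ yields $\Phi$.

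Full faithfulness is then a formal consequence of Drinfeld's calculus of dg-quotients: since $\perf{X_0}$ is a common thick subcategory of the idempotent-complete dg-categories $\cohbp{X_0}{X}$ and $\cohb{X_0}$, modding out preserves full faithfulness, so $\bar\iota$ is fully faithful, and combined with the fully faithful embedding $\sing{X,\underline{f}}\hookrightarrow \sing{X_0}$ recorded in the earlier remark, so is $\Phi$. For essential surjectivity, I would take $\bar E\in \sing{X,\underline{f}}\subseteq \sing{X_0}$ and pick a representative $E\in \cohb{X_0}$. Membership in $\ker(\mathfrak{i}_*)$ means $\mathfrak{i}_*E\simeq 0$ in $\sing{X}$; using that $\cohb{X}$ is idempotent complete and that $\perf{X}\subseteq \cohb{X}$ is thick, the standard description of the zero objects in a Verdier quotient of idempotent-complete triangulated categories forces $\mathfrak{i}_*E\in \perf{X}$, i.e.\ $E\in \cohbp{X_0}{X}$, and $\bar E$ lies in the essential image of $\Phi$.

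The main subtlety is precisely this last step: one must upgrade ``zero in the Morita quotient $\sing{X}$'' to ``genuinely perfect on $X$'', not merely ``a retract of a perfect complex''. This is where working in $\dgcatm$ — i.e.\ the idempotent completeness of $\cohb{X}$ together with the thickness of $\perf{X}\subseteq \cohb{X}$ — is indispensable; without it $\Phi$ would only realize $\cohbp{X_0}{X}/\perf{X_0}$ as a dense subcategory of $\sing{X,\underline{f}}$ rather than as an honest equivalence.
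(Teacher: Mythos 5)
Your strategy is the standard one (the paper itself offers no proof here and defers entirely to \cite[Proposition 2.25]{brtv}, whose argument is essentially what you reconstruct). The construction of $\Phi$ and the full faithfulness step are correct: since $\perf{X_0}$ is thick in $\cohb{X_0}$ and $\cohbp{X_0}{X}$ is a thick full triangulated subcategory containing it, any roof between objects of $\cohbp{X_0}{X}$ in the Verdier quotient $[\cohb{X_0}]/[\perf{X_0}]$ has its apex isomorphic to an object of $\cohbp{X_0}{X}$, so $\bar\iota$ is quasi-fully faithful, and this persists after idempotent completion and after restricting along the full embedding $\sing{X,\underline{f}}\hookrightarrow\sing{X_0}$.

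The gap is in essential surjectivity, and it is of exactly the same nature as the subtlety you flag in your final paragraph --- you apply the cure to $\sing{X}$ but not to $\sing{X_0}$. Because the quotients are taken in $\dgcatm$, $\sing{X_0}$ is the \emph{idempotent completion} of the Verdier quotient $[\cohb{X_0}]/[\perf{X_0}]$, and singularity categories are not idempotent complete in general; so an object $\bar E\in\sing{X,\underline{f}}\subseteq\sing{X_0}$ need not admit a ``representative $E\in\cohb{X_0}$'': it is a priori only a retract of some $\bar F$ with $F\in\cohb{X_0}$, and the complementary summand need not be killed by $\mathfrak{i}_*$, so you cannot run your argument on $F$ directly. (The paper is sensitive to exactly this point: it is why Theorem \ref{structure theorem} only asserts that every object is a \emph{retract} of a module concentrated in $n+1$ degrees.) The repair is standard: since $\Phi$ is fully faithful with idempotent-complete source, its essential image is closed under retracts in $\sing{X_0}$, so it suffices to realize each $\bar E$ in the kernel as a retract of an object of the kernel that does come from $\cohb{X_0}$. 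By Thomason's classification of dense subcategories (or directly, because $[\bar E]+[\bar E[1]]=0$ in $K_0$), the object $\bar E\oplus\bar E[1]$ is isomorphic to $\bar F$ for an honest $F\in\cohb{X_0}$; it still lies in the kernel, so your argument --- which correctly uses that $\perf{X}$ is closed under retracts in $\cohb{X}$ to upgrade ``$\mathfrak{i}_*\bar F\simeq 0$ in $\sing{X}$'' to ``$\mathfrak{i}_*F\in\perf{X}$'' --- shows $F\in\cohbp{X_0}{X}$, and $\bar E$ is then a retract of an object in the image of $\Phi$. With this insertion your proof is complete.
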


We shall now re-propose, for the multi-potential case, the strict model for $\cohbp{X_0}{X}$ which was first introduced in \cite{brtv}.

\begin{construction}\label{strict model derived tensor product}
Let $(Spec(B),\underline{f})\in \lgm{n}^{\textup{aff}}$. Consider the Koszul complex $K(B,\underline{f})$
\begin{equation}
0\rightarrow \bigwedge^n (B\varepsilon_1\oplus \dots \oplus B\varepsilon_n) \rightarrow \dots \rightarrow \bigwedge^2 (B\varepsilon_1\oplus \dots \oplus B\varepsilon_n) \rightarrow (B\varepsilon_1\oplus \dots \oplus B\varepsilon_n) \rightarrow B \rightarrow 0
\end{equation}
concetrated in degrees $[-n,0]$. The differential is given by
\[
\bigwedge^k (B\varepsilon_1\oplus \dots \oplus B\varepsilon_n)\rightarrow \bigwedge^{k-1} (B\varepsilon_1\oplus \dots \oplus B\varepsilon_n)
\]
\[
v_1\wedge \dots \wedge v_k\mapsto \sum_{i=1}^k (-1)^{1+i}\phi(v_i) v_1\wedge \dots \wedge \hat{v_i} \wedge \dots \wedge v_k,
\]
where $\phi : B^n \rightarrow B$ is the morphism corresponding to the matrix $[f_1\dots f_n]$. Multiplication is given by concatenation. Notice that $K(B,\underline{f})$ is a cofibrant $B$-module and that we always have a truncation morphism $K(B,\underline{f})\rightarrow B/\underline{f}$, which is a quasi-isomorphism whenever $\underline{f}$ is a regular sequence.

Therefore, we can present $K(B,\underline{f})$ as the cdga $B[\varepsilon_1,\dots,\varepsilon_n]$, where the $\varepsilon_i$'s sit in degree $-1$ and are subject to the following conditions:
\[
d(\varepsilon_i)=f_i \hspace{0.5cm} i=1,\dots,n,
\]
\[
\varepsilon_i^2=0 \hspace{0.5cm} i=1,\dots,n,
\]
\[
\varepsilon_{i_1}\dots \varepsilon_{i_k}= (-1)^{\sigma} \varepsilon_{i_{\sigma(1)}}\dots \varepsilon_{i_{\sigma(k)}} \hspace{0.5cm} \{i_1,\dots,i_k\}\subseteq \{1,\dots,n\}, \sigma \in \mathcal{S}_k.
\]
\end{construction}
\begin{ex}
For instance, when $n=1$, $K(B,f)$ is the cdga $B\varepsilon \xrightarrow{f} B$ concentrated in degrees $[-1,0]$ and, when $n=2$, $K(B,(f_1,f_2))$ is the cdga $B\varepsilon_1 \varepsilon_2\xrightarrow{\begin{bmatrix} -f_2 \\ f_1 \end{bmatrix}} B\varepsilon_1 \oplus B\varepsilon_2 \xrightarrow{\begin{bmatrix} f_1 & f_2 \end{bmatrix}} B$ concentrated in degrees $[-2,0]$.
\end{ex}
\begin{rmk}\label{rmk model Koszul algebra}
Notice that $K(B,\underline{f})$ provides a model for the cdga associated to the simplicial commutative algebra $B\otimes^{\mathbb{L}}_{A[T_1,\dots,T_n]}A$. Indeed, this can be computed explicitly for $n=1$ and the general case follows from the compatibility of the Dold-Kan correspondence with (derived) tensor products.
\end{rmk}
This strict model for the derived zero locus of an affine LG model of order $n$ over $S$ gives us strict models for the relevant categories of modules too. Following \cite{brtv}:
\begin{itemize}
\item There is an equivalence of $A$-linear dg categories between $\qcoh{X_0}$ and the dg category (over $A$) of cofibrant $K(B,\underline{f})$-dg modules, which we will denote $\widehat{K(B,\underline{f})}$. A $K(B,\underline{f})$-dg module is the datum of a cochain complex of $B$-modules $(E,d)$, together with $n$ $B$-linear morphisms $h_1,\dots,h_n : E\rightarrow E[1]$ of degree $-1$ such that
\begin{equation}\label{identities h_i}
\begin{cases}
h_i^2=0 \hspace{1.1cm} i=1,\dots,n, \\
[d,h_i]=f_i \hspace{0.5cm} i=1,\dots,n, \\
[h_i,h_j]=0 \hspace{0.5cm} i,j=1,\dots,n.
\end{cases}
\end{equation}
\item $\cohb{X_0}\subseteq \qcoh{X_0}$ corresponds to the full sub-dg category of $\widehat{K(B,\underline{f})}$ spanned by those modules of cohomologically bounded amplitude and whose cohomology is coherent over $B/\underline{f}$;
\item $\perf{X_0}\subseteq \qcoh{X_0}$ corresponds to the full sub-dg category of $\widehat{K(B,\underline{f})}$ spanned by those modules which are homotopically finitely presented.
\end{itemize}
\begin{rmk}\label{commutation d and h}
Notice that, for any $K(B,\underline{f})$-dg module, for any $1\leq k \leq n$ and for any $\{ i_1,\dots,i_k\}\subseteq \{1,\dots,n\}$ (where the $i_j$'s are pairwise distinguished),  the following formula holds:
\[
[d,h_{i_1}\circ \dots \circ h_{i_k}]=\sum_{j=1}^k(-1)^{j+1} f_{i_j} h_{i_1}\circ \dots \widehat{h_{i_j}} \dots \circ h_{i_k}.
\]
\end{rmk}
\begin{rmk}
As in the mono-potential case (see \cite[Remark 2.30]{brtv}), $\mathfrak{i}_* : \qcoh{X_0}\rightarrow \qcoh{X}$   
corresponds, under these equivalences, to the forgetful functor $\widehat{K(B,\underline{f})}\rightarrow \widehat{B}$ ($K(B,\underline{f})$ is a cofibrant $B$-module). 
\end{rmk}
Recall that a complex of $B$-modules is strictly perfect if it is strictly bounded and degree-wise projective of finite type. 
We propose the following straightforward generalization of \cite[Construction 2.31]{brtv} as a strict model for $\textup{Coh}^b(X_0)_{\textup{Perf}(X)}$:
\begin{construction}
Let $\textup{Coh}^s(B,\underline{f})$ be the $A$-linear sub-dg category of $\widehat{K(B,\underline{f})}$ spanned by those modules whose image along the forgetful functor $K(B,\underline{f})-dgmod\rightarrow B-dgmod$ is a strictly perfect complex of $B$-modules. In particular, an object $E$ in $\textup{Coh}^s(B,\underline{f})$ is a degree-wise projective cochain complex of $B$-modules together with $n$ morphisms $h_1,\dots,h_n$ of degree $-1$ satisfying the identities (\ref{identities h_i}). As $A$ is a local ring, it is clear that $\textup{Coh}^s(B,\underline{f})$ is a locally flat $A$-linear dg category.
\end{construction}
\begin{lmm}\label{explicit model sing}
Let $(X,\underline{f})=(Spec(B),\underline{f})$ be a $n$-dimensional affine LG model over $S$. Then the cofibrant replacement dg functor induces an equivalence
\begin{equation}
\textup{Coh}^s(B,\underline{f})[q.iso^{-1}]\simeq \textup{Coh}^s(B,\underline{f})/\textup{Coh}^{s,acy}(B,\underline{f})\simeq \cohbp{X_0}{X},
\end{equation}
where $\textup{Coh}^{s,acy}(B,\underline{f})$ is the full sub-dg category of $\textup{Coh}^s(B,\underline{f})$ spanned by acyclic complexes. In particular, this implies that we have equivalences of dg categories
\begin{equation}
\textup{Coh}^{s}(B,\underline{f})/\textup{Perf}^s(B,\underline{f})\simeq \cohbp{X_0}{X}/\perf{X_0}\simeq \sing{X,\underline{f}},
\end{equation}
where $\textup{Perf}^s(B,\underline{f})$ is the full sub-dg category of $\textup{Coh}^s(B,\underline{f})$ spanned by those modules which are perfect over $K(B,\underline{f})$.
\end{lmm}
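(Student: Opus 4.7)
The plan is to mirror the argument of \cite{brtv} for $n=1$; the formalism generalizes to arbitrary $n$ by induction on the number of Koszul generators. The statement decomposes into the two displayed equivalences, plus the concluding ``in particular.'' For the first equivalence, I would invoke the standard Drinfeld--To\"en description of dg-quotients (see \cite{dri}, \cite{to07}): in $\dgcatm$, the quotient of $\textup{Coh}^s(B,\underline{f})$ by the full sub-dg-category of acyclic objects realizes the $\infty$-categorical localization at quasi-isomorphisms, since $\textup{Coh}^{s,acy}(B,\underline{f})$ is precisely the class of objects sent to zero by the localization and the hom-complexes are $A$-flat.

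For the second equivalence, I would analyze the cofibrant replacement dg-functor $\textup{Coh}^s(B,\underline{f})\to \widehat{K(B,\underline{f})}\simeq\qcoh{X_0}$. Given $E\in\textup{Coh}^s(B,\underline{f})$, a cofibrant replacement $QE$ is quasi-isomorphic to $E$ and its underlying $B$-complex remains perfect; thus $\mathfrak{i}_*(QE)$ is perfect in $\qcoh{X}$, and bounded coherence of cohomology over $B/\underline{f}$ is preserved. The functor therefore factors through $\cohbp{X_0}{X}$, and since quasi-isomorphisms go to equivalences, it descends to $\bar F$ on the quotient by $\textup{Coh}^{s,acy}(B,\underline{f})$. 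Fully faithfulness is then automatic, as the mapping complex out of a cofibrant object computes the derived mapping complex in $\qcoh{X_0}$.

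Essential surjectivity of $\bar F$ is the substantive step. Given $E\in\cohbp{X_0}{X}$, the $B$-complex $\mathfrak{i}_*E$ is perfect, hence quasi-isomorphic to a strictly perfect complex $P$. One must transport the $K(B,\underline{f})$-action through such a $B$-resolution: construct operators $\tilde h_1,\ldots,\tilde h_n$ on $P$ satisfying the relations (\ref{identities h_i}) together with a quasi-isomorphism of $K(B,\underline{f})$-dg modules $P\to E$. This is an obstruction-theoretic problem: the semi-freeness of $K(B,\underline{f})$ over $B$ in the Koszul generators $\varepsilon_i$, together with the degree-wise projectivity of $P$, forces the successive obstructions to lie in vanishing Ext-groups, exactly as in the $n=1$ case treated in \cite{brtv}. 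One lifts first the $\tilde h_i$'s (lifting the degree $-1$ operators), then the relations $\tilde h_i^2=0$, then $[\tilde h_i,\tilde h_j]=0$, proceeding by induction on the Koszul degree.

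The concluding ``in particular'' then follows by further quotienting both sides by $\textup{Perf}^s(B,\underline{f})$ (resp.\ $\perf{X_0}$) and invoking the equivalence $\cohbp{X_0}{X}/\perf{X_0}\simeq\sing{X,\underline{f}}$ recalled above, upon observing that $\textup{Perf}^s(B,\underline{f})$ is sent under $\bar F$ to a dg-category Morita-equivalent to $\perf{X_0}$ (a strictly perfect $K(B,\underline{f})$-module is perfect as a $K(B,\underline{f})$-module, and conversely perfect modules admit strictly perfect representatives by the same lifting argument). The main obstacle I foresee is the Koszul bookkeeping required to lift the higher relations $[\tilde h_i,\tilde h_j]=0$ along the resolution $P\to E$; this reduces to a finite sequence of Ext-vanishing checks that succeed because $P$ is degree-wise projective and bounded, so the argument remains elementary albeit more combinatorially involved than in the $n=1$ case.
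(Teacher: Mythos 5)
Your overall architecture matches what the paper intends: the paper's own proof is a one-line deferral to \cite[Lemma 2.33]{brtv}, and that argument does decompose exactly as you say (the first equivalence via the Drinfeld--To\"en description of dg-localizations, the functor induced by cofibrant replacement, full faithfulness from cofibrancy, essential surjectivity as the substantive step, and the ``in particular'' by passing to quotients by $\textup{Perf}^s$ resp.\ $\perf{X_0}$). The problem is that your treatment of the substantive step contains a genuine gap.

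You propose to pick a strictly perfect complex of $B$-modules $P$ quasi-isomorphic to $\mathfrak{i}_*E$ and then to install the operators $\tilde h_i$ on $P$ by obstruction theory, asserting that the successive obstructions lie in vanishing Ext-groups because $P$ is degree-wise projective and bounded. This is not correct as stated. The first lift does exist: since $E$ carries a $K(B,\underline{f})$-structure, each $f_i$ acts null-homotopically on $\mathfrak{i}_*E$, hence on $P$, so one finds $\tilde h_i$ with $[d,\tilde h_i]=f_i$. But the obstruction to correcting $\tilde h_i$ so that $\tilde h_i^2=0$ (and likewise $[\tilde h_i,\tilde h_j]=0$) is a class in $H^{-2}(\textup{End}_B(P))=\textup{Hom}_{D(B)}(P,P[-2])$, and \emph{negative} self-Exts of a bounded complex of projectives do not vanish in general (already $P=B\oplus B[2]$ has $\textup{Hom}_{D(B)}(P,P[-2])\supseteq \textup{Hom}_B(B,B)\neq 0$). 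Degree-wise projectivity kills positive Ext's between modules, not negative Ext's between complexes; what homotopy transfer gives you on $P$ is only an $A_\infty$-module structure over $K(B,\underline{f})$, and rectifying it changes the underlying complex. The argument that actually works --- and the one in \cite{brtv} that the paper is invoking --- runs in the opposite order: take a bounded-above semi-free (cofibrant) resolution $P\rightarrow E$ in $K(B,\underline{f})$-dg-modules, so that the strict action of the $\varepsilon_i$ is present from the start, and then use that $\mathfrak{i}_*E$ has finite Tor-amplitude over $B$ to see that the good truncation $\tau^{\geq m}P$ for $m\ll 0$ is still a strict $K(B,\underline{f})$-dg-module with strictly perfect underlying $B$-complex: $\textup{coker}(d^{m-1})$ is $B$-projective for $m$ below the Tor-amplitude, the $h_i$ are declared zero on the bottom degree, and the relation $[d,h_i]=f_i$ survives the passage to the cokernel because $d(h_i x)\in \textup{im}(d^{m-1})$. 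With that replacement for your essential-surjectivity step, the rest of your outline (localization at quasi-isomorphisms as the quotient by $\textup{Coh}^{s,acy}(B,\underline{f})$, and the identification of $\textup{Perf}^s(B,\underline{f})$ with $\perf{X_0}$ in the quotient) goes through as you describe.
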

\begin{proof}
See \cite[Lemma 2.33]{brtv}. The same proof  holds true in our situation too.
\end{proof}

We now exhibit the functorial properties of $\textup{Coh}^s(\bullet,\bullet)$. Let $u:(Spec(C),\underline{g})\rightarrow (Spec(B,\underline{f}))$ be a morphism in $\lgm{n}^{\textup{aff}}$. Define the dg functor
\begin{equation}
u^* : \textup{Coh}^s(B,\underline{f})\rightarrow \textup{Coh}^s(C,\underline{g})
\end{equation}
by the law
\[
E\mapsto E\otimes_B C.
\]
Notice that this dg functor is well defined as $E\otimes_B C$ is strictly bounded and degree-wise $C$-projective. It is clear that if two composable morphisms 
\[
(Spec(B),\underline{f})\xrightarrow{u}(Spec(B'),\underline{f}')\xrightarrow{u'}(Spec(B''),\underline{f}'')
\]
are given, $u'^*\circ u^*\simeq (u'\circ u)^*$ are equivalent dg functors $\textup{Coh}^s(B'',\underline{f}'')\rightarrow \textup{Coh}^s(B,\underline{f})$. It is also clear that $id_{(Spec(B),\underline{f})}^*\simeq id_{\textup{Coh}^s(B,\underline{f})}$ and that this law is (weakly) associative and (weakly) unital. In other words,
\begin{equation}\label{Coh^s}
\textup{Coh}^s(\bullet,\bullet) : \lgm{n}^{\textup{aff, op}}\rightarrow \dgcatlf
\end{equation}
has the structure of a pseudo-functor.
We next produce a lax monoidal structure on this pseudo-functor. We begin by producing a map
\begin{equation}\label{right lax map Coh^s}
\textup{Coh}^s(B,\underline{f})\otimes \textup{Coh}^s(C,\underline{g})\rightarrow \textup{Coh}^s(B\otimes_A C,\underline{f}\boxplus \underline{g}),
\end{equation}
Write
\[
K(B,\underline{f})=B[\varepsilon_1,\dots,\varepsilon_n],
\]
\[
K(C,\underline{g})=C[\delta_1,\dots,\delta_n],
\]
\[
K(B\otimes_A C,\underline{f}\boxplus \underline{g})=B\otimes_A C[\gamma_1,\dots,\gamma_n],
\]
where all the $\varepsilon_i$'s, $\delta_i$'s and $\gamma_i$'s sit in degree $-1$ and are subject to the relations (\ref{strict model derived tensor product}). Consider the following morphism
\[
\phi:K(B\otimes_A C,\underline{f}\boxplus \underline{g})\rightarrow K(B,\underline{f})\otimes_A K(C,\underline{g})
\]
\[
\gamma_i \mapsto \varepsilon_i\otimes 1 + 1\otimes \delta_i, \hspace{0.5cm} i=1,\dots, n.
\]

Let $F \in \textup{Coh}^s(B,\underline{f})$, $G\in \textup{Coh}^s(C,\underline{g})$. Then the $A$-module $F\otimes_A G$ has a natural structure of $K(B,\underline{f})\otimes_A K(C,\underline{g})$-dg module. Concretely, this is the graded $B\otimes_AC$-module whose term in degree $n$ is $\bigoplus_{i+j=n}F_i\otimes_A G_j$. The differential $d_{F\otimes_AG}$ is defined, for two homogeneous elements $x\in F_i$ and $y\in G_j$, by the usual formula
\[
d_{F\otimes_AG}(x\otimes y)=d_F(x)\otimes y +(-1)^i x\otimes d_G(y).
\]
Moreover, for two such elements, the homotopy $h_{F\otimes_AG}^j:F\otimes_AG\rightarrow F\otimes_AG[1]$ is defined by the formula
\[
h_{F\otimes_AG}^j(x\otimes y)=h_F^j(x)\otimes y +(-1)^i x\otimes h_G^j(y).
\]

 We define $F\boxtimes G$ to be $F\otimes_A G$ with the $K(B\otimes_A C,\underline{f}\boxplus \underline{g})$-dg module structure induced by $\phi$.

As $F \in \textup{Coh}^s(B,\underline{f})$, $G\in \textup{Coh}^s(C,\underline{g})$ and strictly bounded complexes are stable under tensor product, we conclude that $F\boxtimes G$ is a strictly bounded complex. To see that each term is projective of finite type over $B\otimes_AC$, it suffices to observe that in degree $m$ $F\boxtimes G$ is $\oplus_{i+j=m}F_i\otimes_AG_j$, where $F_i$ is a projective $B$-module and $G_j$ is a projective $C$-module.
 
We next exhibit the lax unit\footnote{$\underline{A}$ denotes the $\otimes$-unit in $\dgcatlfo$, i.e. the dg category with one object $\bullet$ whose complex of endomorphisms $Hom_{\underline{A}}(\bullet,\bullet)$ is just $A$ in degree $0$.}
\begin{equation}\label{lax unit Coh^s}
\underline{A} \rightarrow Coh^s(A,\underline{0}).
\end{equation}
This is simply the dg functor defined by
\[
\bullet \mapsto A,
\]
where $A$ (concentrated in degree $0$) is seen as a module over $K(A,\underline{0})$ in the obvious way, i.e. the $\varepsilon_i$'s act via zero. 

This defines a (right) lax monoidal structure on (\ref{Coh^s})
\begin{equation}\label{Coh^s monoidal}
\textup{Coh}^{s,\boxtimes}(\bullet,\bullet): \lgm{n}^{\textup{aff,op}, \boxplus}\rightarrow \dgcatlfo.
\end{equation}
\begin{rmk}\label{box product preserves perfs}
Notice that if $F \in \textup{Perf}^s(B,\underline{f})$ and $G\in \textup{Perf}^s(C,\underline{g})$, then $F\boxtimes G \in \textup{Perf}^s(B\otimes_AC,\underline{f}\boxplus \underline{g})$. This follows from the observation that, if we denote by $pr_1$ (resp. $pr_2$) the projection from $V(\underline{f})\times_S V(\underline{g})\simeq Spec(K(B,\underline{f}))\times_SSpec(K(C,\underline{g}))$ to $V(\underline{f})=Spec(K(B,\underline{f}))$ (resp. $V(\underline{g})=Spec(K(C,\underline{g}))$) and we let $\phi:V(\underline{f})\times_S V(\underline{g})\rightarrow V(\underline{f}\boxplus \underline{g})=Spec(B\otimes_AC,\underline{f}\boxplus \underline{g})$ be the morphism defined above. Notice that $\phi$ is lci by Remark \ref{lci morphisms stable pullbacks}. Then 
\[
-\boxtimes - :\textup{Coh}^s(B,\underline{f})\otimes_A \textup{Coh}^s(C,\underline{g})\rightarrow \textup{Coh}^s(B\otimes_AC,\underline{f}\boxplus \underline{g})
\] 
is a model for the $\oo$-functor
\[
\phi_*\bigl ( pr_1^*(-)\otimes pr_2^*(-) \bigr ):\cohbp{V(\underline{f})}{Spec(B)}\otimes_A \cohbp{V(\underline{g})}{Spec(C)}\rightarrow \cohbp{V(\underline{f}\boxplus \underline{g})}{Spec(B\otimes_A C)}.
\]
Clearly the external tensor product $pr_1^*(-)\otimes pr_2^*(-)$ preserves perfect complexes. In order to conclude that $F\boxtimes G$ is perfect, it suffices to notice that $\phi$ is an lci morphism. In fact, this morphism naturally lives in the following diagram

\begin{equation}
\begindc{\commdiag}[15]
\obj(0,60)[1]{$V(\underline{f})\times_S^h V(\underline{g})$}
\obj(80,60)[2]{$V(\underline{f}\boxplus \underline{g})$}
\obj(160,60)[3]{$Spec(B\otimes_A C)$}
\obj(0,30)[4]{$S$}
\obj(80,30)[5]{$\aff{n}{S}$}
\obj(160,30)[6]{$\aff{n}{S}\times_S \aff{n}{S}$}
\obj(80,0)[7]{$S$}
\obj(160,0)[8]{$\aff{n}{S},$}
\mor{1}{2}{$\phi$}
\mor{2}{3}{$\psi$}
\mor{1}{4}{$$}

\mor{2}{5}{$$}

\mor{3}{6}{$\underline{f}\times \underline{g}$}
\mor{4}{5}{$\text{zero}$}
\mor{5}{6}{$\begin{bmatrix}id & -id\end{bmatrix}$}
\mor{5}{7}{$$}

\mor{6}{8}{$+$}
\mor{7}{8}{$\text{zero}$}
\enddc
\end{equation}
where all squares are (homotopy) Cartesian.
Clearly the bottom square is Cartesian. The two bigger squares, obtained by joining the two upper squares and the two rightmost squares are also obviously Cartesian. The remaining two squares are seen to be Cartesian by a double application of the following fact: suppose that we are given a commutative diagram in an $\oo$-category $\mathcal{C}$
\begin{equation*}
  \begindc{\commdiag}[15]
    \obj(-20,10)[1]{$X_1$}
    \obj(0,10)[2]{$X_2$}
    \obj(20,10)[3]{$X_3$}
    \obj(-20,-10)[4]{$X_4$}
    \obj(0,-10)[5]{$X_5$}
    \obj(20,-10)[6]{$X_6$}
    \mor{1}{2}{$$}
    \mor{2}{3}{$$}
    \mor{1}{4}{$$}
    \mor{2}{5}{$$}
    \mor{3}{6}{$$}
    \mor{4}{5}{$$}
    \mor{5}{6}{$$}
  \enddc
\end{equation*} 
and that the bigger and righmost squares are Cartesian. Then, the square on the left is Cartesian as well.
\end{rmk}
By the same technical arguments of \cite[Construction 2.34, Construction 2.37]{brtv} we produce a (right) lax monoidal $\oo$-functor
\begin{equation}\label{oo Coh^s monoidal}
\cohbp{\bullet}{\bullet}^{\otimes}: \lgm{n}^{\textup{aff,op},\boxplus}\rightarrow \dgcatmo.
\end{equation}
In order to define the lax monoidal $\oo$-functor
\begin{equation}\label{functor sing}
\sing{\bullet,\bullet}^{\otimes} : \lgm{n}^{\text{aff,op},\boxplus}\rightarrow \dgcatmo,
\end{equation}
consider the category $\pdgcat$ introduced in \emph{loc. cit.}, whose objects are pairs $(T,S)$, where T is an $A$-linear dg category and $S$ a class of morphisms in $T$. Given two objects $(T,S)$ and $(T',S')$, morphisms $(T,S)\rightarrow (T',S')$ are those dg functors $F:T\rightarrow T'$ such that $S$ is sent into $S'$. Composition and identities are defined in the obvious way.
Given a morphism $(T,S)\rightarrow (T',S')$, we say that it is a Dwyer-Kan equivalence if the underlying dg functor is so (i.e. it is a quasi-equivalence). We denote the class of Dwyer-Kan equivalences in $\pdgcat$ by $W_{DK}$.

Notice that $\pdgcat$ inherits a symmetric monoidal structure from $\dgcatlfo$ by setting $(T,S)\otimes (T',S')=(T\otimes T',S\otimes S')$. We will refer to this symmetric monoidal category by $\pdgcato$. As we are considering locally flat dg categories, it is immediate that this tensor structure is compatible with $DK$ equivalences.
For any $n$-dimensional affine LG model $(Spec(B),\underline{f})$ over $S=Spec(A)$, define $W_{\textup{Perf}^s(B,\underline{f})}$ as the class of morphisms $\bigl ( 0\rightarrow E \bigr )_{E\in \textup{Perf}^s(B,\underline{f})}$ in $\textup{Coh}^s(B,\underline{f})$. Consider the functor
\begin{equation}\label{canonical pair dg cat}
\lgm{n}^{\textup{aff,op}} \rightarrow \pdgcat
\end{equation}
\[
(Spec(B,\underline{f}))\mapsto (\textup{Coh}^s(B,\underline{f}),W_{\textup{Perf}^s(B,\underline{f})}).
\]
If $E\in \textup{Perf}^s(B,\underline{f})$ and $F\in 
\textup{Perf}^s(C,\underline{g})$, then $(0\rightarrow E)\otimes (0\rightarrow F)\in W_{\textup{Perf}^s(B,\underline{f})}\otimes W_{\textup{Perf}^s(C,\underline{g})}$ is sent to $0\rightarrow E\boxtimes F$ via (\ref{right lax map Coh^s}), which belongs to $W_{\textup{Perf}^s(B\otimes_A C,\underline{f}\boxplus \underline{g})}$ (see Remark \ref{box product preserves perfs}). Then the functor (\ref{canonical pair dg cat}) has a lax monoidal enhancement 
\begin{equation}\label{lax canonical pair dg cat}
\lgm{n}^{\textup{aff,op}, \boxplus} \rightarrow \pdgcato.
\end{equation}

By \cite[Construction 2.34 and Construction 2.37]{brtv}, there is a strongly monoidal $\oo$-functor
\begin{equation}\label{loc dg}
loc_{dg}^{\otimes} : \pdgcato [W_{DK}^{-1}] \rightarrow \dgcatdko
\end{equation}
sending a pair $(T,S)$ to the dg localization $T[S^{-1}]_{dg}$.

We finally define (\ref{functor sing}) as the following composition
\begin{equation}
\lgm{n}^{\textup{aff,op}, \boxplus} \xrightarrow{loc. \circ (\ref{lax canonical pair dg cat})}\pdgcato[W_{DK}^{-1}]\xrightarrow{(\ref{loc dg})} \dgcatdko \xrightarrow{\text{loc.}} \dgcatmo.
\end{equation}
Notice that $(Spec(B),\underline{f})\in \lgm{n}^{\textup{aff}}$ is sent to $\sing{B,\underline{f}}$ by Lemma \ref{explicit model sing} and by the fact that the quotient $\textup{Coh}^s(B,\underline{f})/\textup{Perf}^s(B,\underline{f})$ is, by definition, the dg localization $\textup{Coh}^s(B,\underline{f})[W_{\textup{Perf}^s(B,\underline{f})}^{-1}]$ (see \cite[\S8.2]{to07}).
\begin{rmk}
If $n=1$, the lax monoidal structure on the $\oo$-functor $\sing{\bullet,\bullet}^{\otimes}$ identifies with the lax monoidal structure on the $\oo$-functor defined in \cite[Proposition 2.45]{brtv}. In fact, for every affine LG model $(B,f)$, there is a canonical morphism $(\textup{Coh}^s(B,f),W_{\textup{q.iso}})\rightarrow (\textup{Coh}^s(B,f),W_{\textup{Perf}^s(B,f)})$ in the category $\textup{Pairs-dgCat}_S^{\textup{lf}}$, where $W_{q.iso}$ denotes the class of quasi-isomorphisms in $\textup{Coh}^s(B,f)$. This naturally induces a lax monoidal natural transformation
\begin{equation*}
   \cohbp{\bullet}{\bullet}^{\otimes} \rightarrow \sing{\bullet,\bullet}^{\otimes}.
\end{equation*}
Similarly, the canonical dg functor $\perf{A[u]}\rightarrow \perf{A[u,u^{-1}]}$, where $u$ sits in cohomological degree 2, induces a lax monoidal natural transformation
\begin{equation*}
\cohbp{\bullet}{\bullet}^{\otimes} \rightarrow \cohbp{\bullet}{\bullet}^{\otimes}\otimes_{A[u]}A[u,u^{-1}],
\end{equation*}
where the lax monoidal $\oo$-functor on the right is the one defined in \cite{brtv}. The claim follows from the observation that these two lax monoidal natural transformations share the same universal property: they are universal among those lax monoidal natural transformations $\cohbp{\bullet}{\bullet}^{\otimes}\rightarrow F^{\otimes}$ such that the composition with $\perf{\bullet}^{\otimes}\rightarrow \cohbp{\bullet}{\bullet}^{\otimes}$ is homotopic to zero. Here $\perf{\bullet}^{\otimes}$ denotes the symmetric monoidal $\oo$-functor 
\begin{equation*}
\lgm{1}^{\textup{aff,op},\boxplus}\xrightarrow{-\times^h_{\aff{1}{S}}S} \textbf{dAffSch}_S^{\textup{op},\times_S} \xrightarrow{\perf{\bullet}^{\otimes}} \dgcatmo,
\end{equation*}
where $\textbf{dAffSch}_S$ is the $\oo$-category of affine derived schemes over $S$.
\end{rmk}

\section{The structure of \texorpdfstring{$\sing{B,\underline{f}}$}{sing(B,f)}}

In this section we will prove that, in the category of relative singularities $\sing{B,\underline{f}}$ associated to a $n$-dimensional affine Landau-Ginzburg model over $S$, every object is an homotopy retract of an object that can be represented by a $K(B,\underline{f})$-dg module concentrated in $n+1$-degrees. We begin with the following observation:
\begin{lmm}\label{cone K(B,f) dg mod}
Let $\phi : (E,d,h)\rightarrow (E',d',h')$ be a cocycle-morphism of $K(B,f)$-dg modules\footnote{Here $d$ (resp. $d'$) stands for the differential and $h^i$ (resp.$h'^i$ ) stands for the action of $\varepsilon_i$, where 

$K(B,\underline{f})=0\rightarrow \underbracket{B \varepsilon_1\dots \varepsilon_n}_{-n}\rightarrow \dots \rightarrow \underbracket{B\varepsilon_1\oplus \dots \oplus B\varepsilon_n}_{-1}\rightarrow \underbracket{B}_{0}$.}.  Then the cone of $\phi$ is given by
\begin{equation}
\begindc{\commdiag}[20]
\obj(-20,2)[]{$\dots \leftrightarrows$}
\obj(0,0)[]{$\underbracket{E_{n+1}\oplus E'_{n}}_{n}$}
\obj(80,0)[]{$\underbracket{E_{n+2}\oplus E'_{n+1}}_{n+1}$}
\obj(160,0)[]{$\underbracket{E_{n+3}\oplus E'_{n+2}}_{n+2}$}
\obj(185,2)[]{$\leftrightarrows \dots$}
\mor(10,2)(70,2){$\begin{bmatrix}
-d_{n+1} & 0 \\
\phi_{n+1} & d'_{n}
\end{bmatrix}$}[\atright, \solidarrow]
\mor(90,2)(150,2){$\begin{bmatrix}
-d_{n+2} & 0 \\
\phi_{n+2} & d'_{n+1}
\end{bmatrix}$}[\atright, \solidarrow]
\mor(150,4)(90,4){$\begin{bmatrix}
-h_{n+3}^{i} & 0 \\
0 & h_{n+2}^{'i}
\end{bmatrix}$}[\atright, \solidarrow]
\mor(70,4)(10,4){$\begin{bmatrix}
-h_{n+2}^{i} & 0 \\
0 & h_{n+1}^{'i}
\end{bmatrix}$}[\atright, \solidarrow]
\enddc
\end{equation}
\end{lmm}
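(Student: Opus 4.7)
The plan is to recognize the claimed formula as the mapping cone of $\phi$ in the dg-category of complexes of $B$-modules, equipped with a natural $K(B,\underline{f})$-dg-module structure extending those of $E$ and $E'$.

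First I would observe that, after forgetting the action of $\varepsilon_1,\dots,\varepsilon_n$, the morphism $\phi$ becomes a closed degree-zero morphism of complexes of $B$-modules whose mapping cone, computed in the standard way, has underlying graded module $C^n=E^{n+1}\oplus E'^n$ and differential $\begin{bmatrix}-d_{n+1} & 0 \\ \phi_{n+1} & d'_n\end{bmatrix}$. This matches the formula in the statement at the level of complexes of $B$-modules and requires no separate verification.

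Next I would equip this graded module with the action of each $\varepsilon_i$ via the diagonal matrix $\begin{bmatrix}-h^i & 0 \\ 0 & h'^i\end{bmatrix}$, the minus sign on the $E$-component being forced by the Koszul rule for the shift functor. The three defining identities of a $K(B,\underline{f})$-dg-module structure then follow by direct matrix computation: $h_i^2=0$ and $h_ih_j+h_jh_i=0$ are immediate since both matrices are block-diagonal and the corresponding identities hold on $E$ and $E'$; for $[d,h_i]=f_i\cdot\mathrm{id}$, the two diagonal entries give back the identities on $E$ and $E'$, while the lone off-diagonal entry equals $h'^i\circ\phi-\phi\circ h^i$, which vanishes precisely because $\phi$ is a cocycle-morphism of $K(B,\underline{f})$-dg modules and therefore commutes strictly with each $h_i$ by definition.

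Finally, to confirm that this construction genuinely represents the cone of $\phi$ in $\widehat{K(B,\underline{f})}$, I would exhibit the evident closed degree-zero inclusion $E'\hookrightarrow C$ and projection $C\twoheadrightarrow E[1]$, verify that they commute with all $\varepsilon_i$-actions (which is immediate from the block-diagonal form of the $h_i$'s), and observe that the resulting triangle $E\xrightarrow{\phi} E'\to C\to E[1]$ is the standard distinguished triangle. Equivalently, one may appeal to the fact that the forgetful functor $\widehat{K(B,\underline{f})}\to\widehat{B}$ preserves and reflects cofiber sequences, so any $K(B,\underline{f})$-dg-module whose underlying complex of $B$-modules is the classical cone and which sits in such a triangle must agree with the cone of $\phi$. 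The only real obstacle is bookkeeping the shift signs consistently; once these are pinned down, the entire verification is mechanical.
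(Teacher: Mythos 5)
Your proposal is correct and follows essentially the same route as the paper, whose proof simply notes that the underlying complex of $B$-modules is the classical cone and that the remaining compatibility with the $\varepsilon_i$-actions is a ``tedious but elementary verification''. You have carried out that verification explicitly (the only nontrivial point being that the off-diagonal term of $[d,h_i]$ vanishes because $\phi$ strictly commutes with the $\varepsilon_i$-actions), which is exactly what the paper leaves to the reader.
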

\begin{proof}
Note that the underlying complex of $B$-modules is the classical cone. It only remains to check that all the morphisms involved in the proof of the fact that this complex of $B$ modules is the cone are compatible with the action of $\varepsilon$. This is a tedious but elementary verification.
\end{proof}

Consider an object $(E,d,\{h^i\}_{i\in \{1,\dots,n\}}) \in \textup{Coh}^s(B,\underline{f})$. Then its underlying $B$-dg module $(E,d)$ is strictly perfect. As the (derived) pullback preserves perfect complexes, $(E,d)\otimes_B K(B,\underline{f})$ lies is $\textup{Perf}^s(B,\underline{f})$. This is the $K(B,\underline{f})$-dg module which, in degree $m$ and $m+1$ has the shape
\begin{equation}
\bigoplus_{k=0}^n E_{m+k}\otimes_B\bigwedge^k \bigl (B\varepsilon_1\oplus \dots \oplus B\varepsilon_n \bigr ) \xrightarrow{\partial_m} \bigoplus_{k=0}^n E_{m+k+1}\otimes_B\bigwedge^k \bigl ( B\varepsilon_1\oplus \dots \oplus B\varepsilon_n \bigr ). 
\end{equation}
Moreover, $\partial_m$ is defined as follows: for any $x\in E_{m+k}$
\begin{equation}
\partial_m(x\otimes \varepsilon_{i_1}\wedge \dots \wedge \varepsilon_{i_k})= (-1)^k d_{m+k}(x)\otimes \varepsilon_{i_1}\wedge \dots \wedge \varepsilon_{i_k}+\sum_{j=1}^k \bigl ((-1)^{j+1}f_{i_j}x \otimes \varepsilon_{i_1}\wedge \dots \wedge \widehat{\varepsilon_{i_j}} \wedge \dots \wedge \varepsilon_{i_k} \bigr ).
\end{equation}
The degree $-1$ morphisms 
\begin{equation}
\eta^j_{m} :\bigoplus_{k=0}^n E_{m+k}\otimes_B\bigwedge^k \bigl (B\varepsilon_1\oplus \dots \oplus B\varepsilon_n \bigr ) \rightarrow \bigoplus_{k=0}^n E_{m+k-1}\otimes_B\bigwedge^k \bigl (B\varepsilon_1\oplus \dots \oplus B\varepsilon_n \bigr ) 
\end{equation}
are defined, for $x\in E_{m+k}$, by
\begin{equation}
\eta_m^j(x\otimes \varepsilon_{i_1}\wedge \dots \wedge \varepsilon_{i_k})=x\otimes \varepsilon_j\wedge \varepsilon_{i_1}\wedge \dots \wedge \varepsilon_{i_k}.
\end{equation}
These $B$-linear morphisms are the components of the $B$-linear morphisms $\eta^j:(E,d)\otimes_B K(B,\underline{f})\rightarrow (E,d)\otimes_B K(B,\underline{f})[-1]$ which endow the complex of $B$-modules $(E,d)\otimes_BK(B,\underline{f})$ with the structure of a $K(B,\underline{f})$-dg module.
Notice that we have a morphism of $B$-dg modules $\phi : (E,d)\otimes_B K(B,\underline{f})\rightarrow (E,d,\{h^i\}_{i\in \{1,\dots,n\}})$ which is defined in degree $m$ by ($x\in E_{m+k}$)
\begin{equation}
\bigoplus_{k=0}^n E_{m+k}\otimes_B \bigwedge^k \bigl (B\varepsilon_1\oplus \dots \oplus B\varepsilon_n \bigr ) \rightarrow E_m
\end{equation}
\[
x\otimes \varepsilon_{i_1}\wedge \dots \wedge \varepsilon_{i_k} \mapsto h_{m-1}^{i_1}\circ \dots \circ h_{m+k}^{i_k}(x),
\]
where with this notation, when $k=0$, we just mean the identity morphism.
\begin{lmm}
$\phi$ is a cocycle morphism of $K(B,\underline{f})$-dg modules.
\end{lmm}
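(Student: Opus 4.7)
The plan is to verify that $\phi$ satisfies the two defining conditions of a cocycle morphism of $K(B,\underline{f})$-dg modules: strict commutation with the differentials, and strict commutation with each of the $\varepsilon_j$-actions for $j=1,\dots,n$. Since $\phi$ is manifestly a degree-zero $B$-linear map, these are the only things to check, and both can be tested on generators of the form $x \otimes \varepsilon_{i_1} \wedge \cdots \wedge \varepsilon_{i_k}$ with $x \in E_{m+k}$.

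First I would verify compatibility with the $\varepsilon_j$-actions, i.e.\ that $h^j \circ \phi = \phi \circ \eta^j$ on any such generator. Both sides unwind to (signed) compositions of the form $h^j \circ h^{i_1} \circ \cdots \circ h^{i_k}$ applied to $x$, and the equality becomes tautological once the relations $(h^i)^2 = 0$ and $[h^i,h^j] = 0$ from (\ref{identities h_i}) are invoked: these mirror exactly the defining relations on the $\varepsilon$'s inside $K(B,\underline{f})$, so the Koszul signs on source and target align automatically.

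Next I would check that $d \circ \phi = \phi \circ \partial$. Expanding $\phi \circ \partial$ on $x \otimes \varepsilon_{i_1} \wedge \cdots \wedge \varepsilon_{i_k}$ using the explicit formula for $\partial$ yields
\[
(-1)^k\, h^{i_1} \circ \cdots \circ h^{i_k}(d(x)) \;+\; \sum_{j=1}^k (-1)^j f_{i_j}\, h^{i_1} \circ \cdots \widehat{h^{i_j}} \cdots \circ h^{i_k}(x),
\]
while $d \circ \phi$ evaluates to $d\bigl(h^{i_1} \circ \cdots \circ h^{i_k}(x)\bigr)$. The equality of these two expressions is exactly the content of the graded-commutator identity of Remark \ref{commutation d and h}, applied to the composition $h^{i_1} \circ \cdots \circ h^{i_k}$, which has total degree $-k$ (whence the leading sign $(-1)^k$ from the Koszul rule).

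The only real obstacle is sign bookkeeping: one has to track carefully the signs arising from moving the odd-degree $\varepsilon_i$'s past each other and past $d$. However, the relations (\ref{identities h_i}) defining a $K(B,\underline{f})$-dg module and the commutator formula in Remark \ref{commutation d and h} are set up precisely to make these signs cancel correctly on both sides, so the verification reduces to the routine identities above.
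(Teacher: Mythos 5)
Your proposal is correct and follows essentially the same route as the paper: compatibility with the $\varepsilon_j$-actions is a direct check from the relations (\ref{identities h_i}), and commutation with the differentials is obtained by expanding $\phi\circ\partial$ via the explicit formula for $\partial$ and matching it against $d\circ\phi$ using the graded-commutator identity of Remark \ref{commutation d and h}. The only difference is cosmetic — you spell out the $\eta^j$-compatibility that the paper dismisses as clear, and your sign in the sum ($(-1)^j$) matches the stated formulas for $\partial$ and the Remark literally, whereas the paper's displayed computation carries an extra global sign that cancels on both sides.
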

\begin{proof}
It is clear that $\phi$ is a morphism of $K(B,\underline{f})$-dg modules, i.e. that
\[
\phi \circ \eta^j=h^j\circ \phi.
\]
We then only need to show that $\phi$ commutes with the differentials too. Pick $x\in E_{m+k}$. Then
\[
d_{m}(\phi_m(x\otimes \varepsilon_{i_1}\wedge \dots \wedge \varepsilon_{i_k}))=d_{m}(h^{i_1}_{m+1}\circ \dots \circ h^{i_k}_{m+k}(x))\underbracket{=}_{(\ref{commutation d and h})} 
\]
\[
\sum_{j=1}^k(-1)^{j+1}f_{i_j}h^{i_1}_{m+2}\circ \dots \circ \widehat{h^{i_j}} \circ \dots \circ h^{i_k}_{m+k}(x)+(-1)^kh^{i_1}_{m+2}\circ \dots \circ h^{i_k}_{m+k+1}\circ d_{m+k}(x).
\]
On the other hand, we have that
\[
\phi_{m+1}(\partial_m(x\otimes \varepsilon_{i_1}\wedge \dots \varepsilon_{i_k}))=
\]
\[
\phi_{m+1}\Bigl ((-1)^k d_{m+k}(x)\otimes \varepsilon_{i_1}\wedge \dots \wedge \varepsilon_{i_k}+\sum_{j=1}^k \bigl ((-1)^{j+1}f_{i_j}x \otimes \varepsilon_{i_1}\wedge \dots \wedge \widehat{\varepsilon_{i_j}} \wedge \dots \wedge \varepsilon_{i_k} \bigr ) \Bigr )
\]
\[
=(-1)^kh^{i_1}_{m+2}\circ \dots \circ h^{i_k}_{m+k+1}\circ d_{m+k}(x)+\sum_{j=1}^k(-1)^{j+1}f_{i_j}h^{i_1}_{m+2}\circ \dots \circ \widehat{h^{i_j}} \circ \dots \circ h^{i_k}_{m+k}(x).
\]
If $k=0$, then $\phi_m(x)=x$ and there is nothing to show.
\end{proof}
\begin{rmk}
As the source of $\phi$ is a perfect $K(B,\underline{f})$-dg module, it follows that $(E,d,\{h^i\}_{i=1,\dots,n})$ and $cone(\phi)$ are equivalent in $\sing{B,\underline{f}}$.
\end{rmk}
\begin{prop}
Assume that $(E,d,\{h^i\}_{i=1,\dots,n})$ as above is concentrated in degrees $[m',m]$, where $m-m'\geq n+1$ (i.e. the dg-module is concentrated in at least $n+2$-degrees). Then $cone(\phi)$ is equivalent, in $\sing{B,\underline{f}}$, to a $K(B,\underline{f})$-dg module concentrated in degrees $[m',m-1]$.
\end{prop}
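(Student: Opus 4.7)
The plan is to replace $\mathrm{cone}(\phi)$ with a quasi-isomorphic $K(B,\underline{f})$-dg module concentrated in degrees $[m',m-1]$ via two successive reductions, one chopping the bottom, the other chopping the top.

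First, I would introduce the free (hence perfect) $K(B,\underline{f})$-dg module $P := E_m \otimes_B K(B,\underline{f})[m]$, concentrated in degrees $[m-n,m]$ with $P_{m-k} = E_m \otimes_B \bigwedge^k(B\varepsilon_1\oplus\cdots\oplus B\varepsilon_n)$. There is a natural embedding $P\hookrightarrow \tilde E := (E,d)\otimes_B K(B,\underline{f})$ sending $P_{m-k}$ to the summand $E_m\otimes\bigwedge^k(B\varepsilon_1\oplus\cdots\oplus B\varepsilon_n)$ of $\tilde E_{m-k}$; this is a sub-$K(B,\underline{f})$-dg module because $d(x)\in E_{m+1}=0$ for $x\in E_m$, so only the Koszul differential survives and keeps elements within $P$. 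The composition $\alpha := \phi|_P\colon P\to E$ is the identity in top degree. Tensoring the short exact sequence of $B$-complexes $0\to E_m[-m]\to E\to\bar E\to 0$ (where $\bar E$ is the brutal truncation of $E$ to $[m',m-1]$) with the flat $B$-complex $K(B,\underline{f})$ yields $0\to P\to\tilde E\to \bar E\otimes_B K(B,\underline{f})\to 0$, and since $\bar E$ is a bounded complex of projective $B$-modules, the quotient is perfect over $K(B,\underline{f})$. Passing to mapping cones gives the short exact sequence
\[
0\to\mathrm{cone}(\alpha)\to\mathrm{cone}(\phi)\to \bigl(\bar E\otimes_B K(B,\underline{f})\bigr)[1]\to 0
\]
of $K(B,\underline{f})$-dg modules whose cokernel is perfect, so $\mathrm{cone}(\phi)\simeq\mathrm{cone}(\alpha)$ in $\sing{B,\underline{f}}$.

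Second, I would apply a good truncation to $\mathrm{cone}(\alpha)$. The hypothesis $m-m'\geq n+1$ forces $P[1]\subseteq[m-n-1,m-1]\subseteq[m',m-1]$, so $\mathrm{cone}(\alpha)$ lies in $[m',m]$. Its top differential $\partial_{m-1}\colon E_m\oplus E_{m-1}\to E_m$ is $(a,b)\mapsto a+d_{m-1}(b)$, surjective and split by $a\mapsto(a,0)$ using projectivity of $E_m$; in particular $\hat E_{m-1} := \ker\partial_{m-1}$ is a direct summand of $E_m\oplus E_{m-1}$, hence projective over $B$. Define $\hat E := \tau^{\leq m-1}\mathrm{cone}(\alpha)$ by setting $\hat E_m=0$, $\hat E_{m-1}=\ker\partial_{m-1}$, and $\hat E_p=\mathrm{cone}(\alpha)_p$ for $p<m-1$. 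The $\varepsilon_i$-actions on $\mathrm{cone}(\alpha)$ restrict to $\hat E$: for $x\in\ker\partial_{m-1}$ we have $\partial(h^i x) = [d,h^i](x) + h^i\partial(x) = f^i x$, which lies in $\ker\partial_{m-1}$ by $B$-linearity of $\partial_{m-1}$. Hence $\hat E$ is a sub-$K(B,\underline{f})$-dg module in $\textup{Coh}^s(B,\underline{f})$, and the inclusion $\hat E\hookrightarrow\mathrm{cone}(\alpha)$ is a quasi-isomorphism since both sides have vanishing $H^m$ while agreeing on all lower cohomology. Chaining the two equivalences yields $\mathrm{cone}(\phi)\simeq\hat E$ in $\sing{B,\underline{f}}$ with $\hat E$ concentrated in $[m',m-1]$.

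The main technical obstacle will be the first reduction: verifying that $P\hookrightarrow\tilde E$ is a sub-$K(B,\underline{f})$-dg module and identifying the quotient precisely with $\bar E\otimes_B K(B,\underline{f})$ requires a careful Koszul degree-by-degree comparison (including sign bookkeeping) using the explicit form of the differential from Construction~\ref{strict model derived tensor product} together with Remark~\ref{commutation d and h}. Once this identification is in hand, the rest of the argument---perfection of $\bar E\otimes_B K(B,\underline{f})$, the good truncation, projectivity of the kernel, and the compatibility of the $h^i$-actions with truncation---is essentially formal.
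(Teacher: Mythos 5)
Your proof is correct, and it shares the paper's two‑step skeleton — first modify $cone(\phi)$ by a perfect $K(B,\underline{f})$-dg module so as to land in degrees $[m',m]$, then take the good truncation $\tau^{\leq m-1}$, whose new term in degree $m-1$ is projective because the top differential $\begin{bmatrix}1 & d_{m-1}\end{bmatrix}\colon E_m\oplus E_{m-1}\to E_m$ is split surjective — but you carry out the first step by a genuinely different decomposition. The paper peels off the \emph{bottom}: it exhibits $cone(\phi)$ as the cone of a map $\psi$ out of the perfect module $-\bigl((E_{m'}\to\cdots\to E_{m'+n})\otimes_B K(B,\underline{f})\bigr)$ into an explicitly described sub-dg-module $F\subseteq cone(\phi)$, obtained by discarding the Koszul summands $E_{j+s+1}\otimes\bigwedge^j(\cdots)$ with $j+s<m'+n$, and then truncates $F$. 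You peel off the \emph{top}: you restrict $\phi$ to the sub-dg-module $P=E_m[-m]\otimes_B K(B,\underline{f})$ generated by the top degree and use functoriality of cones to get the degreewise split short exact sequence $0\to cone(\alpha)\to cone(\phi)\to(\bar E\otimes_BK(B,\underline{f}))[1]\to0$ with perfect cokernel, where $\bar E$ is the brutal truncation of $E$ to $[m',m-1]$. Your intermediate object $cone(\alpha)=P[1]\oplus E$ is smaller, and its identification is arguably cleaner: the step you flag as the ``main technical obstacle'' is in fact the easiest one, since $0\to E_m[-m]\to E\to\bar E\to0$ is a degreewise split exact sequence of complexes of projectives, $-\otimes_BK(B,\underline{f})$ preserves it, and no sign bookkeeping arises because $d$ vanishes on the top summand, so the induced differential on $P$ is purely Koszul; by contrast the paper must verify by hand that $cone(\psi)$ coincides with $cone(\phi)$ as a $K(B,\underline{f})$-dg module. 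One small remark on your truncation step: the computation $\partial(h^ix)=f_ix$ for $x\in\ker\partial_{m-1}$ is not the verification actually required — what must be checked is that $\partial$ carries $\hat E_{m-2}$ into $\ker\partial_{m-1}$ (which is $\partial^2=0$) and that $h^i$ carries $\ker\partial_{m-1}$ into $\hat E_{m-2}=cone(\alpha)_{m-2}$ (which is automatic since that term is untruncated); both are trivial, so nothing is lost.
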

\begin{proof}
We claim that we can exhibit $cone(\phi)$ as the cone of a cocycle morphism of $K(B,\underline{f})$-dg modules whose domain is 
\begin{equation}
-\bigl ( ( E_{m'}\xrightarrow{d_{m'}}\dots \xrightarrow{d_{m'+n}}E_{m'+n})\otimes_B K(B,\underline{f})\bigr ),
\end{equation}
which is a perfect $K(B,\underline{f})$-dg module concentrated in degrees $[m'-n,m'+n]$. For us, the $-$ in front of a $K(B,\underline{f})$-dg module $(U,\delta,\{\mu^j\}_{j=1,\dots,n})$ means that we change the sign of all the $\delta_i$'s and $\mu_i^j$'s. Notice that $-\bigl ( ( E_{m'}\xrightarrow{d_{m'}}\dots \xrightarrow{d_{m'+n}}E_{m'+n})\otimes_B K(B,\underline{f})\bigr )$ is a $K(B,\underline{f})$-sub-dg module of $cone(\phi)$ and that its differential and its homotopies coincide with the ones induced by this inclusion.

Now consider the $K(B,\underline{f})$-sub-dg module of $cone(\phi)$, which in degree $s$ is the projective $B$-module
\begin{equation}
E_{s} \oplus \Bigl (\bigoplus_{\substack{j=0 \\\ j+s\geq m'+n}}^n E_{j+s+1}\otimes_B \bigwedge^{j}(B\varepsilon_1\oplus \dots \oplus B\varepsilon_n) \Bigr)\subseteq \bigl ( cone(\phi) \bigr )_s,
\end{equation}
which we will refer to as $(F,\partial,\{\eta^i\}_{i=1,\dots n})$\footnote{Clearly, $\partial_.$ and $\{\eta_.^i\}_{i=1,\dots,n}$ are induced by $cone(\phi)$.}. This is still a $K(B,\underline{f})$-dg module as $\partial_.$ and $\{\eta_.^i\}_{i=1,\dots,n}$ are well defined on it, i.e. $\partial_s(F_s)\subseteq F_{s+1}$ and $\eta_{s}^i(F_s)\subseteq F_{s-1}$.  Notice that, for $s\leq m'-1$, $F_s=0$ and, for $s\geq m'+n$, $F_s=cone(\phi)_s$. This means that $(F,\partial,\{\eta^i\}_{i=1,\dots n})$ is a $K(B,\underline{f})$-dg module concentrated in degrees $[m',m]$, as $cone(\phi)_s=0$ if $s>m$. Label $\iota : (F,\partial,\{\eta^i\}_{i=1,\dots n})\rightarrow cone(\phi)$ the canonical inclusion and $\pi : cone(\phi)\rightarrow (F,\partial,\{\eta^i\}_{i=1,\dots n})$ the canonical projection.

Notice that, for any $s$, we have that
\begin{equation}
F_s\oplus  \bigl (- ( E_{m'}\xrightarrow{d_{m'}}\dots \xrightarrow{d_{m'+n}}E_{m'+n}\bigr )\otimes_B K(B,\underline{f})\bigr )_{s+1}=
\end{equation}
\[
E_s \oplus \Bigl (\bigoplus_{\substack{j=0 \\\ j+s\geq m'+n}}^n E_{j+s+1}\otimes_B \bigwedge^{j}(B\varepsilon_1\oplus \dots \oplus B\varepsilon_n) \Bigr) \oplus \Bigl ( \bigoplus_{\substack{j=0 \\\ j+s+1\leq m'+n}}^{n} E_{j+s+1}\otimes_B \bigwedge^j ( B\varepsilon_1\oplus \dots \oplus B\varepsilon_n) \Bigr )
\]
\[
= E_s \oplus \Bigl (\bigoplus_{j=0}^n E_{j+s+1}\otimes_B\bigwedge^{j}(B\varepsilon_1\oplus \dots \oplus B\varepsilon_n) \Bigr)=cone(\phi)_s.
\]
Define 
\begin{equation}
\psi : -\bigl ( ( E_{m'}\xrightarrow{d_{m'}}\dots \xrightarrow{d_{m'+n}}E_{m'+n-1})\otimes_B K(B,\underline{f})\bigr ) \rightarrow (F,\partial, \{\eta^i\}_{i=1,\dots,n})
\end{equation}
in every degree as the composition
\[
\Bigl ( \bigoplus_{\substack{j=0 \\\ j+s\leq m'+n}} E_{s+j}\otimes_B\bigwedge^j ( B\varepsilon_1\oplus \dots \oplus B\varepsilon_n) \Bigr ) \subseteq cone(\phi)_{s-1}\xrightarrow{\partial_{s-1}}cone(\phi)_s\xrightarrow{\pi} F_s.
\]
This is a cocycle morphism by construction. Notice that, as $\eta_s^i(F_s)\subseteq F_{s-1}$ and $\eta_s^i\bigl ( ( E_{m'}\xrightarrow{d_{m'}}\dots \xrightarrow{d_{m'+n}}E_{m'+n})\otimes_B K(B,\underline{f})\bigr )_s\subseteq \bigl ( ( E_{m'}\xrightarrow{d_{m'}}\dots \xrightarrow{d_{m'+n}}E_{m'+n})\otimes_B K(B,\underline{f})\bigr )_{s-1}$, by Lemma \ref{cone K(B,f) dg mod} we find that $cone(\psi)=cone(\phi)$.

To conclude, notice that $(F,\partial,\{\eta^i\}_{i=1,\dots,n})$ coincides, in degrees $m-1$ and $m$, with
\[
E_{m-1}\oplus E_m \xrightarrow{\begin{bmatrix} d_{m-1}, 1\end{bmatrix}} E_m.
\]
Therefore, $(F,\partial,\{\eta^i\}_{i=1,\dots,n})$ is quasi-isomorphic to 
\[
\begindc{\commdiag}[20]
\obj(0,0)[]{$F_{m'}$}
\obj(50,0)[]{$\dots$}
\obj(100,0)[]{$F_{m-2}$}
\obj(150,0)[]{$Ker(\begin{bmatrix} d_{m-1}, 1\end{bmatrix})\simeq E_{m-1}.$}
\mor(2,-2)(28,-2){$\partial_{m'}$}[\atright,\solidarrow]
\mor(28,2)(2,2){$\eta_{m'+1}^i$}[\atright,\solidarrow]
\mor(72,-2)(98,-2){$\partial_{m-3}$}[\atright,\solidarrow]
\mor(98,2)(72,2){$\eta_{m-2}^i$}[\atright,\solidarrow]
\mor(102,-2)(128,-2){$\tilde{\partial}_{m-2}$}[\atright,\solidarrow]
\mor(128,2)(102,2){$\tilde{\eta}_{m-1}^i$}[\atright,\solidarrow]
\enddc
\]
As $(F,\partial,\{\eta^i\}_{i=1,\dots,n})$ is equivalent to $(E,d\{h^i\}_{i=1,\dots,n})$ in $\sing{B,\underline{f}}$, we have proved the proposition. 
\end{proof}
\begin{rmk}{[B.~Keller]}
The previous proposition holds in a more general setting. More precisely: let $B$ be a commutative ring, $n$ an integer and let $R$ be a (possibly non-commutative) dg algebra such that
\begin{itemize}
    \item $R_p=0$ if $p\notin [-n,0]$;
    \item $R_0=B$.
\end{itemize}
Let $m,m'\in \mathbb{Z}$ such that $m-m'\geq n+1$ and let $E$ be a right dg $R$-module such that
\begin{itemize}
    \item $E_p=0$ if $p\notin [m',m]$;
    \item $E_p$ is a projective right $B$-module of finite type, for all $p\in \mathbb{Z}$.
\end{itemize}
Let $\textup{\textbf{Mod}}_R$ denote the dg category of right dg $R$-modules and $\perf{R}$ the sub-dg category spanned by perfect right dg $R$-modules. Then $E$ is equivalent in $\textup{\textbf{Mod}}_R/\perf{R}$ to a right dg $R$-module $F$ verifying the following two conditions:
\begin{itemize}
    \item $F_p=0$ if $p\notin [m',m-1]$;
    \item $F_p$ is a projective right $B$-module of finite type, for all $p\in \mathbb{Z}$.
\end{itemize}
\end{rmk}
\begin{rmk}{[B.~Keller]} It seems that the content of the previous proposition is close to the "fundamental domain" theorem due to C.~Amiot (see \cite{am09}) and generalized by O.~Iyama and D.~Yang (see \cite{iy17}). The precise comparison will be investigated elsewhere.
\end{rmk}
Then the following structure theorem holds:
\begin{thm}\label{structure theorem}
Let $(Spec(B),\underline{f})$ be a $n$-dimensional affine Landau-Ginzburg model  over $S$. Then every object in the dg category of relative singularities $\sing{B,\underline{f}}$ is an homotopy retract of an object represented by a $K(B,\underline{f})$-dg module concentrated in $n+1$ degrees.
\end{thm}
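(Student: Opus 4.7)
The plan is to deduce the theorem from the preceding Proposition by induction on the amplitude of a strict representative, combined with the Karoubi-completion feature built into the Morita quotient defining $\sing{B,\underline{f}}$.

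First, I would invoke \textbf{Lemma} (\ref{explicit model sing}) to replace $\sing{B,\underline{f}}$ by the dg-quotient $\textup{Coh}^s(B,\underline{f})/\textup{Perf}^s(B,\underline{f})$ computed in $\dgcatm$. Because this quotient is by construction the image in $\dgcatm$ of the naive dg-quotient taken in $\dgcatdk$, i.e.\ its idempotent completion, every object of $\sing{B,\underline{f}}$ is by definition a retract of one coming from $\textup{Coh}^s(B,\underline{f})$. It therefore suffices to show that any strict model $(E,d,\{h^i\}_{i=1,\dots,n}) \in \textup{Coh}^s(B,\underline{f})$ is equivalent, in $\sing{B,\underline{f}}$, to a $K(B,\underline{f})$-dg module concentrated in exactly $n+1$ degrees.

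Next I would run an iterative shrinking argument. Since $(E,d)$ is strictly perfect, it is concentrated in some finite range $[m',m]$. If $m-m'\leq n$ we are already done. Otherwise $m-m'\geq n+1$ and the preceding Proposition applies to produce an equivalent object in $\sing{B,\underline{f}}$ concentrated in the strictly smaller range $[m',m-1]$. Moreover, unpacking its proof, the new representative is obtained by collapsing the top two degrees of a $K(B,\underline{f})$-sub-dg module of $cone(\phi)$ via the quasi-isomorphism identifying them with $Ker(\begin{bmatrix} d_{m-1} & 1\end{bmatrix})\simeq E_{m-1}$; since $E_{m-1}$ is still finitely generated projective, the output remains inside $\textup{Coh}^s(B,\underline{f})$, so the step can legitimately be iterated. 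After exactly $m-m'-n$ applications one reaches a strict representative concentrated in $n+1$ degrees, equivalent to the original in $\sing{B,\underline{f}}$.

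Concatenating the two reductions, every object of $\sing{B,\underline{f}}$ is a retract of an object represented by a $K(B,\underline{f})$-dg module concentrated in $n+1$ degrees. The main difficulty, namely constructing the shrinking step at all, has already been handled in the preceding Proposition, so what remains is essentially a formal induction. The one subtle point worth emphasizing is why \emph{retract} (rather than plain equivalence) appears in the conclusion: it is forced by the fact that $\sing{B,\underline{f}}$ is defined via a quotient in $\dgcatm$, where passage through idempotent completion may introduce summands that are only retracts of strictly representable objects.
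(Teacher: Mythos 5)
Your proposal is correct and follows essentially the same route as the paper: reduce to strict models via \textbf{Lemma} (\ref{explicit model sing}), induct on the amplitude using the preceding Proposition to shrink the range one degree at a time, and account for the word ``retract'' by the idempotent completion inherent in the Morita-quotient (equivalently, the homotopy category of $\sing{B,\underline{f}}$ is the idempotent completion of the corresponding Verdier quotient). Your explicit check that the shrunken representative stays in $\textup{Coh}^s(B,\underline{f})$ (because $Ker(\begin{bmatrix} d_{m-1} & 1\end{bmatrix})\simeq E_{m-1}$ is finitely generated projective) is a point the paper leaves implicit, and is a welcome addition.
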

\begin{proof}\footnote{The author thanks an anonymous referee for suggestions on how to improve the exposition and clarity of an earlier version of this proof, that led to the present one.}
The homotopy category of $\sing{B,\underline{f}}$ coincides with the idempotent completion of the Verdier quotient of the homotopy category of $\cohbp{Spec(K(B,\underline{f}))}{Spec(B)}$ by the homotopy category of $\perf{Spec(K(B,\underline{f}))}$. If fact, by Lemma \ref{explicit model sing}, there is an equivalence
\begin{equation*}
  Q:=\frac{\cohbp{Spec(K(B,\underline{f}))}{Spec(B)}}{\perf{K(B,\underline{f})}}\simeq \sing{B,\underline{f}}.
\end{equation*}
Therefore, by results due to B.~Keller and V.~Drinfeld, we get an equivalence (see \cite{ke99,dri} and also \cite[Proposition 3.7]{ro14})
\begin{equation*}
  [Q]^{\omega}\simeq [\sing{B,\underline{f}}] ,
\end{equation*}
where, $[T]$ denotes the homotopy category of the dg category $T$ and $(-)^{\omega}$ stands for the idempotent completion. Every object in $Q$ is represented by an element of $\textup{Coh}^s(B,\underline{f})$. Assume that $(E,d,\{h^i\}_{i=1,\dots,n})$ is an object in $\textup{Coh}^s(B,\underline{f})$ concentrated in degrees $[m',m]$. We produce an inductive argument on the amplitude $a=m-m'+1$ of the interval $[m',m]$ where $(E,d,\{h^i\}_{i=1,\dots,n})$ is nonzero. If $m-m'\leq n$ there is nothing to prove. Otherwise, apply the previous proposition to replace $(E,d,\{h^i\}_{i=1,\dots,n})$ with an equivalent (in $\sing{B,\underline{f}}$) $K(B,\underline{f})$-dg module concentrated in degrees $[m',m-1]$. Hence, every object in $\sing{B,\underline{f}}$ is an homotopy retract of an object that can be represented by a $K(B,\underline{f})$-dg module concentrated in $n+1$ degrees.
\end{proof}

\section{Orlov's theorem}
\subsection*{Matrix factorizations}
It is well known (see \cite{orl04}, \cite{bw12}, \cite{efpo15}, \cite{brtv}) that the dg category of relative singularities $\sing{B,f}$ associated to a $1$-dimensional affine flat Landau-Ginzburg model over a regular local ring is equivalent to the dg category $\mf{B}{f}$, whose objects are matrix factorizations introduced by Eisenbud (\cite{eis80}).  In this section we shall recall what matrix factorizations are.

\begin{context}
In this section we will always work in the context of $1$-dimensional LG models. Therefore, we will omit to say it explicitly.
\end{context}
Let $(Spec(B),f)$ be an affine LG model over $S$.
\begin{defn}
A \textit{matrix factorization} over $(B,f)$ is the datum of a pair of projective $B$-modules of finite type $E_0$, $E_1$ together with $B$-linear morphisms $E_0\xrightarrow{p_0} E_{1}$ and $E_1\xrightarrow{p_1} E_0$ such that $p_1\circ p_0=f$ and $p_0\circ p_1=f$.
\end{defn}
We can naturally organize matrix factorizations into a $\mathbb{Z}/2\mathbb{Z}$-graded dg category $\mf{B}{f}$ as follows:
\begin{itemize}
\item the objects of $\mf{B}{f}$ are matrix factorizations over $(B,f)$;
\item given two matrix factorizations $(E,p)$ and $(F,q)$ over $(B,f)$, we define the morphisms in degree $0$ (resp. $1$) $Hom^{0}\bigl ( (E,p),(F,q) \bigr )$ (resp. $Hom^{1}\bigl ( (E,p),(F,q) \bigr )$) as the $B$-module of pairs of $B$-linear morphisms $(\phi_0 : E_0 \rightarrow F_0, \phi_1 : E_1 \rightarrow F_1)$ (resp. $(\psi_0 : E_0 \rightarrow F_1, \psi_1 : E_0 \rightarrow F_1)$);
\item given a map $(\chi_0,\chi_1): (E,p)\rightarrow (F,q)$ of degree $i$ ($i=0,1$), we define $\delta ((\chi_0,\chi_1)):= q \circ \chi -(-1)^i \chi \circ p$;
\item composition and identities are defined in the obvious way.
\end{itemize}

Then we can view $\mf{B}{f}$ as an $A$-linear dg category by means of the structure morphism $A\rightarrow B$.
\begin{rmk}
Notice that since we are considering projective $B$-modules and $B$ is flat over $A$, $\mf{B}{f}$ is a locally flat $A$-linear dg category.
\end{rmk}
The homotopy category of $\mf{B}{f}$ has a triangulated structure: the suspension is defined as
\begin{equation}
\bigl ( \dbarrow{E_0}{E_1}{p_0}{p_1}\bigr )[1]= \dbarrow{E_1}{E_0}{-p_1}{-p_0}
\end{equation}
and the cone of a closed morphism $(\phi):(E,p)\rightarrow (F,q)$ is defined by
\begin{equation}
\dbarrow{F_0\oplus E_1}{F_1\oplus E_0}{\begin{bmatrix} q_0 & \phi_1\\ 0 & -p_1 \end{bmatrix}}{\begin{bmatrix} q_1 & \phi_0\\ 0 & -p_0 \end{bmatrix}}.
\end{equation}
See \cite{orl04} for more details.
Moreover, $\mf{B}{f}$ has a symmetric monoidal structure, defined by
\begin{equation}
(E,p)\otimes (F,q)=\begindc{\commdiag}
\obj(0,1)[]{$(E_0\otimes_B F_0)\oplus (E_1\otimes_B F_1)$}
\obj(50,1)[]{$(E_0\otimes_B F_1)\oplus (E_1\otimes_B F_0).$}
\mor(18,-1)(32,-1){$p\otimes q$}[\atright,\solidarrow]
\mor(32,3)(18,3){$p\otimes q$}[\atright,\solidarrow]
\enddc
\end{equation}
As explained in \cite[Constructions 2.8, 2.10, 2.11]{brtv}, it is possible to define a lax monoidal $\oo$-functor
\begin{equation}
\mf{\bullet}{\bullet}^{\otimes}: \lgm{1}^{\text{aff,op},\boxplus}\rightarrow \textbf{dgcat}_A^{\text{idem},\otimes}.
\end{equation}
It is then possible to extend it to $\lgm{1}^{op,\boxplus}$ by Kan extension. With a little abuse of notation, we still denote this extension by
\begin{equation}\label{mf functor}
    \mf{\bullet}{\bullet}^{\otimes}: \lgm{1}^{\text{op},\boxplus}\rightarrow \textbf{dgcat}_A^{\text{idem},\otimes}.
\end{equation}
We refer to \cite[page 661]{brtv} for more details.
\begin{rmk}
There exists a second definition of matrix factorizations for non-affine LG-models $(X,f)$, see \cite{bw12}, \cite{efi18}, \cite{orl12}. If $X$ is a separated scheme with enough vector bundles, the two definitions agree.
\end{rmk}
\begin{rmk}
Being a lax monoidal $\oo$-functor, $(\ref{mf functor})$ factors through $\textup{Mod}_{\mf{A}{0}}(\dgcatm)^{\otimes}$.
\end{rmk}

\subsection*{More on the structure of $\sing{B,f}$}
As the Koszul algebra $K(B,f)$ is particularly simple, in the case $n=1$ it is possible to give a more detailed description of the objects of $\sing{B,f}$. This is what we will do in the following.

Our first remark concerns the periodicity of the dg category $\sing{B,f}$.
\begin{lmm}\label{lemma salto}
Let $\bigl (\dbarrow{\underbracket{E}_{n-1}}{\underbracket{F}_{n}}{p}{q}\bigr )$ be an object in $\textup{Coh}^s(B,f)$. Then it is equivalent to \\  $\bigl (\dbarrow{\underbracket{F}_{n-2}}{\underbracket{E}_{n-1}}{q}{p}\bigr )$ in $\sing{B,f}$.
\end{lmm}
\begin{proof}
Consider $\Bigl ( E\xrightarrow{p}F \Bigr )\otimes_B K(B,f) \in \textup{Perf}^s(B,f)$. This is the $K(B,f)$ dg-module
\begin{equation}
\begindc{\commdiag}[10]
\obj(0,-3)[]{$\underbracket{E}_{n-2}$}
\obj(80,-3)[]{$\underbracket{E\oplus F}_{n-1}$}
\obj(160,-3)[]{$\underbracket{F}_{n}.$}
\mor(3,0)(72,0){$\begin{bmatrix}
-p\\
f
\end{bmatrix}$}[\atright,\solidarrow]
\mor(89,0)(157,0){$\begin{bmatrix}
f & p
\end{bmatrix}$}[\atright,\solidarrow]
\mor(72,4)(3,4){$\begin{bmatrix}
0 & 1
\end{bmatrix}$}[\atright,\solidarrow]
\mor(157,4)(89,4){$\begin{bmatrix}
1 \\
0
\end{bmatrix}$}[\atright,\solidarrow]
\enddc
\end{equation}
Then let $\phi$ be the following morphism of $K(B,f)$ dg modules:
\begin{equation}
\begindc{\commdiag}[30]
\obj(0,20)[1]{$E$}
\obj(30,20)[2]{$F\oplus E$}
\obj(60,20)[3]{$F$}
\obj(30,0)[4]{$E$}
\obj(60,-1)[5]{$\underbracket{F}_{n}.$}
\mor(3,19)(27,19){$\begin{bmatrix}
-p \\
f
\end{bmatrix}$}[\atright, \solidarrow]
\mor(33,19)(57,19){$\begin{bmatrix}
f & p
\end{bmatrix}$}[\atright, \solidarrow]
\mor(57,21)(33,21){$\begin{bmatrix}
1 \\
0
\end{bmatrix}$}[\atright, \solidarrow]
\mor(27,21)(3,21){$\begin{bmatrix}
0 & 1
\end{bmatrix}$}[\atright, \solidarrow]
\mor(33,-1)(57,-1){$p$}[\atright, \solidarrow]
\mor(57,1)(33,1){$q$}[\atright, \solidarrow]
\mor{2}{4}{$\begin{bmatrix}
q & 1
\end{bmatrix}$}[\atright, \solidarrow]
\mor{3}{5}{$1$}[\atright, \solidarrow]
\enddc
\end{equation}
This morphism exhibits an equivalence in $\sing{B,f}$ between $\dbarrow{E}{F}{p}{q}$ and $cone(\phi)$, which is
\begin{equation}
\begindc{\commdiag}[30]
\obj(0,0)[1]{$E$}
\obj(30,0)[2]{$F\oplus E$}
\obj(60,0)[3]{$F\oplus E$}
\obj(90,-1)[4]{$\underbracket{F}_{n}$}
\mor(3,-1)(27,-1){$\begin{bmatrix}
p\\
-f
\end{bmatrix}$}[\atright, \solidarrow]
\mor(33,-1)(57,-1){$\begin{bmatrix}
-f & -p\\
q & 1
\end{bmatrix}$}[\atright, \solidarrow]
\mor(63,-1)(87,-1){$\begin{bmatrix}
1 & p
\end{bmatrix}$}[\atright, \solidarrow]
\mor(27,1)(3,1){$\begin{bmatrix}
0 & -1
\end{bmatrix}$}[\atright, \solidarrow]
\mor(57,1)(33,1){$\begin{bmatrix}
-1 & 0\\
0 & 0
\end{bmatrix}$}[\atright, \solidarrow]
\mor(87,1)(63,1){$\begin{bmatrix}
0 \\
q
\end{bmatrix}$}[\atright, \solidarrow]
\enddc
\end{equation}
and can be written as the cone of the following morphism of $K(B,f)$ dg-modules
\begin{equation}
\begindc{\commdiag}[30]
\obj(0,20)[1]{$E$}
\obj(30,20)[2]{$E$}
\obj(0,0)[3]{$F$}
\obj(30,0)[4]{$F\oplus E$}
\obj(60,-1)[5]{$\underbracket{F}_{n}.$}
\mor(3,19)(27,19){$f$}[\atright, \solidarrow]
\mor(27,21)(3,21){$1$}[\atright, \solidarrow]
\mor{1}{3}{$p$}[\atright, \solidarrow]
\mor{2}{4}{$\begin{bmatrix}
-p \\
1
\end{bmatrix}$}[\atright, \solidarrow]
\mor(3,-1)(27,-1){$\begin{bmatrix}
-f \\
q
\end{bmatrix}$}[\atright, \solidarrow]
\mor(33,-1)(57,-1){$\begin{bmatrix}
1 & p
\end{bmatrix}$}[\atright, \solidarrow]
\mor(27,1)(3,1){$\begin{bmatrix}
-1 & 0
\end{bmatrix}$}[\atright, \solidarrow]
\mor(57,1)(33,1){$\begin{bmatrix}
0 \\
q
\end{bmatrix}$}[\atright, \solidarrow]
\enddc
\end{equation}
Notice that the source of this morphism is $E\otimes_B K(B,f)$, where $E$ is a complex concentrated in degree $n-1$. In particular, as $E$ is a projective $B$-module, it is a perfect $K(B,f)$ dg-module. Therefore, in $\sing{B,f}$, the target of this morphism is equivalent to $\dbarrow{E}{F}{p}{q}$. Then consider the following morphism of $K(B,f)$ dg-modules:
\begin{equation}
\begindc{\commdiag}[30]
\obj(0,20)[1]{$F$}
\obj(30,20)[2]{$F\oplus E$}
\obj(60,20)[3]{$F$}
\obj(0,0)[4]{$F$}
\obj(30,-2)[5]{$\underbracket{E}_{n-1}.$}

\mor(3,19)(27,19){$\begin{bmatrix}
-f \\
q
\end{bmatrix}$}[\atright, \solidarrow]
\mor(33,19)(57,19){$\begin{bmatrix}
1 & p
\end{bmatrix}$}[\atright, \solidarrow]
\mor(27,21)(3,21){$\begin{bmatrix}
-1 & 0
\end{bmatrix}$}[\atright, \solidarrow]
\mor(57,21)(33,21){$\begin{bmatrix}
0 \\
q
\end{bmatrix}$}[\atright, \solidarrow]
\mor(3,-1)(27,-1){$q$}[\atright, \solidarrow]
\mor(27,1)(3,1){$p$}[\atright, \solidarrow]
\mor{4}{1}{$1$}[\atright, \solidarrow]
\mor(30,0)(30,20){$\begin{bmatrix}
-p\\
1
\end{bmatrix}$}[\atright, \solidarrow]
\enddc
\end{equation}
It is not hard to verify that this is a quasi-isomorphism. Following the chain of equivalences in $\sing{B,f}$ we get that 
\[
\dbarrow{\underbracket{E}_{n-1}}{\underbracket{F}_{n}}{p}{q}\simeq \dbarrow{\underbracket{F}_{n-2}}{\underbracket{E}_{n-1}}{q}{p}.
\]
\end{proof}

\begin{cor}\label{suspension in Sing}
Let $\bigl ( \dbarrow{\underbracket{E}_{n-1}}{\underbracket{F}_{n}}{p}{q}\bigr )$ be in $\textup{Coh}^s(B,f)$. Then 
\begin{equation}
\bigl ( \dbarrow{\underbracket{E}_{n-1}}{\underbracket{F}_{n}}{p}{q} \bigr )[1]\simeq \bigl ( \dbarrow{\underbracket{F}_{n-1}}{\underbracket{E}_{n}}{-q}{-p} \bigr )
\end{equation}
in $\sing{B,f}$.
\end{cor}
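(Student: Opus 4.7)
The plan is to deduce the corollary directly from Lemma \ref{lemma del salto della quaglia} by first computing the shift strictly at the level of $K(B,f)$-dg modules and then re-positioning the result into degrees $[n-1,n]$.

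The first step will be to compute $O[1]$, where $O := \bigl(E \xrightarrow{p} F, F \xrightarrow{q} E\bigr)$, strictly as a $K(B,f)$-dg module. Writing $O[1]$ as the cone of the zero morphism $O \to 0$ and applying the formula of Lemma \ref{cone K(B,f) dg mod}, one reads off that $O[1]^k = O^{k+1}$, that its differential is $-d_O$, and that its $\varepsilon$-action is $-h_O$; the sign in the action is forced by the Koszul rule, since $\varepsilon$ has odd degree, and is also visible directly in the cone formula. Applied to our object, this yields $O[1] = \bigl(E \xrightarrow{-p} F,\ F \xrightarrow{-q} E\bigr)$ concentrated in degrees $[n-2,n-1]$.

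The second step will be to invoke Lemma \ref{lemma del salto della quaglia} on $O[1]$. Being an equivalence in $\sing{B,f}$, the lemma can be read right-to-left: an object concentrated in degrees $[m-2,m-1]$ is equivalent to the corresponding swapped object concentrated in degrees $[m-1,m]$. Matching $O[1]$ to the right-hand side of the lemma with $m=n$ --- so that $E$ (in degree $n-2$) plays the role of the lemma's $F$, $F$ (in degree $n-1$) plays the role of its $E$, the differential $-p$ plays the role of its $q$, and the action $-q$ plays the role of its $p$ --- the corresponding left-hand side is the object $\bigl(F \xrightarrow{-q} E,\ E \xrightarrow{-p} F\bigr)$ concentrated in degrees $[n-1,n]$. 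Composing the two equivalences gives the claim.

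The only point requiring care is the bookkeeping of signs under the shift and the relabeling used when invoking Lemma \ref{lemma del salto della quaglia}; apart from this, the corollary is a formal consequence of that lemma, and no genuinely new obstacle is expected.
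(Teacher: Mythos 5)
Your proposal is correct and follows essentially the same route as the paper: first identify the shift $[1]$ with the same pair placed in degrees $[n-2,n-1]$ with both the differential and the $\varepsilon$-action negated, then apply Lemma \ref{lemma del salto della quaglia} (read right-to-left) to move the object back up to degrees $[n-1,n]$ while swapping the two maps. The only difference is cosmetic: you justify the sign conventions for the shift via the cone formula of Lemma \ref{cone K(B,f) dg mod}, whereas the paper simply asserts the description of $(E,d,h)[1]$ in $\textup{Coh}^s(B,f)$.
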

\begin{proof}
In $\textup{Coh}^s(B,f)$, we know that $\bigl ( \dbarrow{\underbracket{E}_{n-1}}{\underbracket{F}_{n}}{p}{q} \bigr )[1]$ is equivalent to \\$\bigl ( \dbarrow{\underbracket{E}_{n-2}}{\underbracket{F}_{n-1}}{-p}{-q} \bigr )$. Then, by Lemma \ref{lemma salto} we get 
\[
\bigl ( \dbarrow{\underbracket{E}_{n-1}}{\underbracket{F}_{n}}{p}{q} \bigr )[1]\simeq \bigl ( \dbarrow{\underbracket{E}_{n-2}}{\underbracket{F}_{n-1}}{-p}{-q} \bigr ) \simeq \bigl ( \dbarrow{\underbracket{
F}_{n-1}}{\underbracket{E}_{n}}{-q}{-p} \bigr ).
\]
\end{proof}

We will now provide an explicit description of the image of an object via the quotient functor 
\[
\textup{Coh}^s(B,f)\rightarrow \sing{B,f}
\]
\begin{thm}\label{structure theorem n=1}
Let
\begin{equation}
\begindc{\commdiag}[20]
\obj(-10,0)[]{$(E,d,h)= 0$}
\obj(30,0)[]{$E_m$}
\obj(60,0)[]{$E_{m+1}$}
\obj(90,0)[]{$\dots$}
\obj(120,0)[]{$E_{m'-1}$}
\obj(150,0)[]{$E_{m'}$}
\obj(180,0)[]{$0$}
\mor(3,0)(27,0){$$}
\mor(33,-1)(57,-1){$d_{m}$}[\atright, \solidarrow]
\mor(57,1)(33,1){$h_{m+1}$}[\atright, \solidarrow]
\mor(63,-1)(87,-1){$d_{m+1}$}[\atright, \solidarrow]
\mor(87,1)(63,1){$$}[\atright, \solidarrow]
\mor(93,-1)(117,-1){$$}[\atright, \solidarrow]
\mor(117,1)(93,1){$h_{-1}$}[\atright, \solidarrow]
\mor(123,-1)(147,-1){$d_{-1}$}[\atright, \solidarrow]
\mor(147,1)(123,1){$h_0$}[\atright, \solidarrow]
\mor(153,0)(177,0){$$}
\enddc
\end{equation}
be an object in $\textup{Coh}^s(B,f)$. Then the following equivalence holds in $\sing{B,f}$:
\begin{equation}
(E,d,h)\simeq 
\begindc{\commdiag}[25]
\obj(0,-3)[]{$\underbracket{\bigoplus_{i\in \mathbb{Z}}E_{2i-1}}_{-1}$}
\obj(40,-3)[]{$\underbracket{\bigoplus_{i\in \mathbb{Z}}E_{2i}}_{0},$}
\mor(8,-1)(32,-1){$d+h$}[\atright, \solidarrow]
\mor(32,1)(8,1){$d+h$}[\atright, \solidarrow]
\enddc
\end{equation}
where $d$ (resp. $h$) is the sum of the $d_i$'s (resp. $h_i$'s).
Moreover, it is natural in $(E,d,h)$,
\end{thm}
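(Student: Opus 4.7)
The strategy is to induct on the amplitude $a = m' - m + 1$ of $(E, d, h) \in \textup{Coh}^s(B,f)$, combining the reduction step developed in the proof of \textbf{Theorem} (\ref{structure theorem}) with \textbf{Lemma} (\ref{lemma del salto della quaglia}).

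For the base case $a = 2$, the object is already a two-term complex $E_m \rightleftharpoons E_{m+1}$. Iterating \textbf{Lemma} (\ref{lemma del salto della quaglia}) -- which on each application swaps the two components and shifts the interval downwards by one -- $m+1$ times lands the module in degrees $[-1, 0]$: $E_m$ ends in position $-1$ if $m$ is odd and in position $0$ if $m$ is even, and symmetrically for $E_{m+1}$. This is exactly the folded matrix factorization of the statement, since at most one summand per parity class is nonzero.

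For the inductive step, apply the reduction of \textbf{Theorem} (\ref{structure theorem}) specialized to $n=1$: this yields an equivalence $(E,d,h) \simeq (F, \partial, \eta)$ in $\sing{B,f}$ with $(F, \partial, \eta)$ of amplitude $a-1$, obtained by absorbing the top degree $E_{m'}$ into a modification of $E_{m'-1}$. The crucial verification is that the matrix factorization associated to $(F, \partial, \eta)$ agrees, in $\sing{B,f}$, with the matrix factorization associated to $(E, d, h)$: the $E_{m'}$-summand removed from one parity class of the fold reappears, through the absorption into $F_{m'-1}$, in the opposite parity class as a term which differs from the required $d+h$ component only by a summand coming from the perfect $K(B,f)$-dg module used in the reduction, hence null in $\sing{B,f}$. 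Applying the inductive hypothesis to $(F, \partial, \eta)$ then finishes the proof.

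Naturality in $(E, d, h)$ follows because each construction used -- the reduction proposition, \textbf{Lemma} (\ref{lemma del salto della quaglia}), and the identification of the folded complexes -- is functorial in morphisms of $K(B,f)$-dg modules, and the quasi-isomorphisms exhibited by the various cone constructions are compatible with pushforward along morphisms. The main obstacle will be the precise bookkeeping of the $h$-action through the reduction: one has to verify that the new $\tilde{\eta}$ on the modified top degree $F_{m'-1}$ interacts with the $\mathbb{Z}/2\mathbb{Z}$-folding in exactly the way needed so that the reduced fold and the original fold sit in the same equivalence class in $\sing{B,f}$, with all discrepancies accounted for by terms in $\textup{Perf}^s(B,f)$.
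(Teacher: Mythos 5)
Your overall skeleton --- induction on the amplitude, with the two-term case handled by \textbf{Lemma} (\ref{lemma del salto della quaglia}) and the inductive step driven by the amplitude-reducing construction behind \textbf{Theorem} (\ref{structure theorem}) --- is the same as the paper's, and your base case is correct. The gap is in the inductive step, and it sits exactly where you yourself place the ``crucial verification''. First, the reduction does not merely absorb $E_{m'}$ into a modification of $E_{m'-1}$: the reduced module $(F,\partial,\eta)$ produced by that construction has \emph{all} of its intermediate terms inflated, $F_s$ being of the form $E_s\oplus E_{s+1}\oplus E_{s+2}\varepsilon$ away from the endpoints (compare (\ref{[-2n+1,-2]})), with a kernel term $K$ at the top. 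So the fold of $F$ differs from the fold of $E$ in every summand, not by a single correction in one parity class, and the claim that the total discrepancy is ``a summand coming from the perfect $K(B,f)$-dg module used in the reduction'' is not something one can read off; it is the whole content of the step.

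Second, and more seriously, the inference ``$(E,d,h)\simeq(F,\partial,\eta)$ in $\sing{B,f}$, hence their folds agree in $\sing{B,f}$'' is circular in this context: the fold is only defined on the strict model $\textup{Coh}^s(B,f)$, and the statement that it carries equivalences in $\sing{B,f}$ (zig-zags of quasi-isomorphisms and cofibres of maps out of objects of $\textup{Perf}^s(B,f)$) to equivalences of the associated two-periodic objects is essentially the well-definedness of $Orl^{-1}$ on $\sing{B,f}$, which is what this theorem is needed to establish (cf. \textbf{Corollary} \ref{Orlov equivalence}). The paper avoids this by never comparing two folds: it runs the entire chain of reductions explicitly, keeping every intermediate object in $\textup{Coh}^s(B,f)$, and only at the very last step exhibits one explicit quasi-isomorphism (\ref{final step}) between the fully reduced two-term module and the fold of the original $(E,d,h)$; it also drops the amplitude by two at each stage, which is what makes the parity bookkeeping of the fold come out correctly. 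To repair your argument you would have to either carry out the explicit comparison of the fold of $F$ with the fold of $E$ --- which amounts to redoing the paper's computation --- or first prove independently that the fold descends to $\sing{B,f}$.
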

\begin{proof}
We will assume that $m=-2n+1$ for some $n>0$ (if $m=-2n+2$, just put $E_{-2n+1}=0$) and that $m'=0$. It is clear that this does not compromise the generality of the proof.

We shall prove that 
\begin{equation*}
   (E,d,h)\simeq 
\begindc{\commdiag}[25]
\obj(0,-3)[]{$\underbracket{\bigoplus_{i\in \mathbb{Z}}E_{2i-1}}_{-1}$}
\obj(40,-3)[]{$\underbracket{\bigoplus_{i\in \mathbb{Z}}E_{2i}}_{0}$}
\mor(8,-1)(32,-1){$d+h$}[\atright, \solidarrow]
\mor(32,1)(8,1){$d+h$}[\atright, \solidarrow]
\enddc
\end{equation*}
by an induction argument on $-m$. If $-m\leq 1$, there is nothing to prove. Let $-m>1$ and assume that for every $K(B,f)$-dg module $(F,\delta, \chi)$ concentrated in at most $-m-1$ degrees, there is a natural equivalence
\begin{equation*}
(F,\delta, \chi)\simeq 
\begindc{\commdiag}[25]
\obj(0,-3)[]{$\underbracket{\bigoplus_{i\in \mathbb{Z}}F_{2i-1}}_{-1}$}
\obj(40,-3)[]{$\underbracket{\bigoplus_{i\in \mathbb{Z}}F_{2i}}_{0}$}
\mor(8,-1)(32,-1){$\delta+\chi$}[\atright, \solidarrow]
\mor(32,1)(8,1){$\delta+\chi$}[\atright, \solidarrow]
\enddc
\end{equation*}
in $\sing{B,f}$.

The first part of the proof is the same as the one of Theorem \ref{structure theorem}, but we rewrite it in an explicit manner for the reader's convenience. 

Consider the perfect $K(B,f)$-dg module $(E,d)\otimes_B K(B,f)$\footnote{recall that $(E,d)$ is a perfect $B$-dg module.} and the following morphism \\$\phi:(E,d)\otimes_B K(B,f)\rightarrow (E,d,h)$ of $K(B,f)$-dg modules:
\begin{equation}
\begindc{\commdiag}[20]
\obj(0,30)[1]{$\underbracket{E_{-2n+1}}_{-2n}$}
\obj(40,30)[2]{$\underbracket{E_{-2n+2}\oplus E_{-2n+1}}_{-2n+1}$}
\obj(100,30)[3]{$\underbracket{E_{-2n+3}\oplus E_{-2n+2}}_{-2n+2}$}
\obj(140,30)[4]{$\dots$}
\obj(170,30)[5]{$\underbracket{E_{0}\oplus E_{-1}}_{-1}$}
\obj(200,30)[6]{$\underbracket{E_{0}}_{0}$}
\mor(5,29)(25,29){$\small{\begin{bmatrix}
-d_{-2n+1} \\
f
\end{bmatrix}}$}[\atright, \solidarrow]
\mor(25,31)(5,31){$\small{\begin{bmatrix}
0 & 1
\end{bmatrix}}$}[\atright, \solidarrow]
\mor(60,29)(80,29){$\small{\begin{bmatrix}
-d_{-2n-2} & 0 \\
f & d_{-2n+1}
\end{bmatrix}}$}[\atright, \solidarrow]
\mor(80,31)(60,31){$\small{\begin{bmatrix}
0 & 1 \\
0 & 0
\end{bmatrix}}$}[\atright, \solidarrow]
\mor(117,29)(140,29){$\small{\begin{bmatrix}
-d_{-2n+3} & 0 \\
f & d_{-2n+2}
\end{bmatrix}}$}[\atright, \solidarrow]
\mor(140,31)(117,31){$\small{\begin{bmatrix}
0 & 1 \\
0 & 0
\end{bmatrix}}$}[\atright, \solidarrow]
\mor(140,29)(160,29){$$}
\mor(160,31)(140,31){$$}
\mor(180,29)(197,29){$\small{\begin{bmatrix}
f & d_{-1}
\end{bmatrix}}$}[\atright, \solidarrow]
\mor(197,31)(180,31){$\small{\begin{bmatrix}
1 \\
0
\end{bmatrix}}$}[\atright, \solidarrow]
\obj(40,-10)[7]{$\underbracket{E_{-2n+1}}_{-2n+1}$}
\obj(100,-10)[8]{$\underbracket{E_{-2n+2}}_{-2n+2}$}
\obj(140,-10)[9]{$\dots$}
\obj(170,-10)[10]{$\underbracket{ E_{-1}}_{-1}$}
\obj(200,-10)[11]{$\underbracket{E_{0}}_{0}.$}
\mor{2}{7}{$\small{\begin{bmatrix}
h_{-2n+2} & 1
\end{bmatrix}}$}
\mor{3}{8}{$\small{\begin{bmatrix}
h_{-2n+3} & 1
\end{bmatrix}}$}
\mor{5}{10}{$\small{\begin{bmatrix}
h_{0} & 1
\end{bmatrix}}$}
\mor{6}{11}{$1$}
\mor(45,-11)(95,-11){$d_{-2n+1}$}[\atright, \solidarrow]
\mor(95,-9)(45,-9){$h_{-2n+2}$}[\atright, \solidarrow]
\mor(105,-11)(125,-11){$$}
\mor(125,-9)(105,-9){$$}
\mor(145,-11)(165,-11){$$}
\mor(165,-9)(145,-9){$$}
\mor(175,-11)(195,-11){$d_{-1}$}[\atright, \solidarrow]
\mor(195,-9)(175,-9){$h_{0}$}[\atright, \solidarrow]
\enddc
\end{equation}
Then, $Cone(\phi)$ is equivalent to $(E,d,h)$ in $\sing{B,f}$. $Cone(\phi)$ is the $K(B,f)$-dg module
\begin{equation}
\begindc{\commdiag}[18]
\obj(-20,0)[]{$\small{\underbracket{E_{-2n+1}}_{-2n-1}}$}
\obj(30,0)[]{$\small{\underbracket{E_{-2n+2}\oplus E_{-2n+1}}_{-2n}}$}
\obj(120,0)[]{$\small{\underbracket{E_{-2n+3}\oplus E_{-2n+2}\oplus E_{-2n+1}}_{-2n+1}}$}
\mor(-12,1)(12,1){$\small{\begin{bmatrix}
d_{\footnotesize{-2n+1}} \\
-f
\end{bmatrix}}$}[\atright, \solidarrow]
\mor(12,5)(-12,5){$\small{\begin{bmatrix}
0 & -1
\end{bmatrix}}$}[\atright, \solidarrow]
\mor(48,1)(92,1){$\small{\begin{bmatrix}
d_{\footnotesize{-2n+2}} & 0 \\
-f & -d_{-2n+1} \\
h_{-2n+2} & 1
\end{bmatrix}}$}[\atright, \solidarrow]
\mor(92,5)(48,5){$\small{\begin{bmatrix}
0 & -1 & 0 \\
0 & 0 & 0
\end{bmatrix}}$}[\atright, \solidarrow]
\mor(150,1)(180,1){$\small{\begin{bmatrix}
d_{\footnotesize{-2n+3}} & 0 & 0 \\
-f & -d_{-2n+2} & 0 \\
h_{-2n+3} & 1 & d_{-2n+1}
\end{bmatrix}}$}[\atright, \solidarrow]
\mor(180,5)(150,5){$\small{\begin{bmatrix}
0 & -1 & 0 \\
0 & 0 & 0 \\
0 & 0 & h_{-2n+2}
\end{bmatrix}}$}[\atright, \solidarrow]
\obj(0,-50)[]{$\small{\underbracket{E_{-2n+4}\oplus E_{-2n+3}\oplus E_{-2n+2}}_{-2n+2}}$}
\obj(50,-50)[]{$\dots$}
\obj(90,-50)[]{$\small{\underbracket{E_{0}\oplus E_{-1}\oplus E_{-2}}_{-2}}$}
\obj(150,-50)[]{$\small{\underbracket{E_0\oplus E_{-1}}_{-1}}$}
\obj(180,-50)[]{$\small{\underbracket{E_0}_{0}},$}
\mor(30,-49)(48,-49){$$}[\atright, \solidarrow]
\mor(48,-45)(30,-45){$$}[\atright, \solidarrow]
\mor(52,-49)(70,-49){$$}[\atright, \solidarrow]
\mor(70,-45)(52,-45){$$}[\atright, \solidarrow]
\mor(108,-49)(140,-49){$\small{\begin{bmatrix}
-f & -d_{-1} & 0 \\
h_0 & 1 & d_{-2}
\end{bmatrix}}$}[\atright, \solidarrow]
\mor(140,-45)(108,-45){$\small{\begin{bmatrix}
-1 & 0 \\
0 & 0 \\
0 & h_{-1}
\end{bmatrix}}$}[\atright, \solidarrow]
\mor(160,-49)(178,-49){$\small{\begin{bmatrix}
1 & d_{-1}
\end{bmatrix}}$}[\atright, \solidarrow]
\mor(178,-45)(160,-45){$\small{\begin{bmatrix}
0 \\
h_0
\end{bmatrix}}$}[\atright, \solidarrow]
\enddc
\end{equation}
which can be seen as the cone of the following morphism (call it $\varphi$)
\begin{equation}
\begindc{\commdiag}[18]
\obj(0,0)[1]{$\small{\underbracket{E_{-2n+1}}_{-2n}}$}
\obj(50,0)[2]{$\small{\underbracket{E_{-2n+2}\oplus E_{-2n+1}}_{-2n+1}}$}
\obj(130,0)[3]{$\small{\underbracket{E_{-2n+2}}_{-2n+2}}$}
\mor(8,1)(32,1){$\small{\begin{bmatrix}
-d_{-2n+1} \\
f
\end{bmatrix}}$}[\atright, \solidarrow]
\mor(32,5)(8,5){$\small{\begin{bmatrix}
0 & 1
\end{bmatrix}}$}[\atright, \solidarrow]
\mor(68,1)(122,1){$\small{\begin{bmatrix}
f & d_{-2n+1}
\end{bmatrix}}$}[\atright, \solidarrow]
\mor(122,5)(68,5){$\small{\begin{bmatrix}
1\\
0
\end{bmatrix}}$}[\atright, \solidarrow]
\obj(50,-50)[4]{$\small{\underbracket{E_{-2n+3}\oplus E_{-2n+1}}_{-2n+1}}$}
\obj(130,-50)[5]{$\small{\underbracket{E_{-2n+4}\oplus E_{-2n+3}\oplus E_{-2n+2}}_{-2n+2}}$}
\obj(180,-50)[]{$\small{\dots}$}
\obj(200,-50)[]{$\small{\underbracket{E_0}_{0}}.$}
\mor(68,-49)(102,-49){$\small{\begin{bmatrix}
d_{-2n+3} & 0 \\
-f & 0 \\
h_{-2n+3} & d_{-2n+1}
\end{bmatrix}}$}[\atright, \solidarrow]
\mor(102,-45)(68,-45){$\small{\begin{bmatrix}
0 & -1 & 0 \\
0 & 0 & h_{-2n+2}
\end{bmatrix}}$}[\atright, \solidarrow]
\mor(158,-49)(178,-49){$$}[\atright, \solidarrow]
\mor(178,-45)(158,-45){$$}[\atright, \solidarrow]
\mor(182,-49)(198,-49){$$}[\atright, \solidarrow]
\mor(198,-45)(182,-45){$$}[\atright, \solidarrow]
\mor{2}{4}{$\small{\begin{bmatrix}
d_{-2n+2} & 0 \\
h_{-2n+1} & 1
\end{bmatrix}}$}[\atright, \solidarrow]
\mor{3}{5}{$\small{\begin{bmatrix}
0 \\
-d_{-2n+2} \\
1
\end{bmatrix}}$}
\enddc
\end{equation}
As the source of this morphism is $\bigl ( \underbracket{E_{-2n+1}}_{-2n+1}\xrightarrow{d_{-2n+1}} \underbracket{E_{-2n+2}}_{-2n+2} \bigr )\otimes_B K(B,f)$, it is a perfect $K(B,f)$-dg module. Therefore, in $\sing{B,f}$ we have that
\[
(E,d,h)\simeq cone(\phi) = cone(\varphi) \simeq target(\varphi).
\]
The cohomology groups in degree $-1$ and $0$ of $target(\varphi)$ vanish. Therefore, we have found that in $\sing{B,f}$ $(E,d,h)$ is equivalent to 
\begin{equation}\label{[-2n+1,-2]}
\begindc{\commdiag}[18]
\obj(50,-50)[4]{$\small{\underbracket{E_{-2n+3}\oplus E_{-2n+1}}_{-2n+1}}$}
\obj(130,-50)[5]{$\small{\underbracket{E_{-2n+4}\oplus E_{-2n+3}\oplus E_{-2n+2}}_{-2n+2}}$}
\obj(180,-50)[]{$\small{\dots}$}
\obj(230,-50)[]{$\underbracket{K}_{-2},$}
\mor(68,-49)(102,-49){$\small{\begin{bmatrix}
d_{-2n+3} & 0 \\
-f & 0 \\
h_{-2n+3} & d_{-2n+1}
\end{bmatrix}}$}[\atright, \solidarrow]
\mor(102,-45)(68,-45){$\small{\begin{bmatrix}
0 & -1 & 0 \\
0 & 0 & h_{-2n+2}
\end{bmatrix}}$}[\atright, \solidarrow]
\mor(158,-49)(178,-49){$$}[\atright, \solidarrow]
\mor(178,-45)(158,-45){$$}[\atright, \solidarrow]
\mor(182,-49)(220,-49){$\small{\begin{bmatrix}
d_{-1} & 0 & 0 \\
-f & -d_{-2} & 0 \\
h_{-1} & 1 & d_{-3}
\end{bmatrix}}$}[\atright, \solidarrow]
\mor(220,-45)(182,-45){$\small{\begin{bmatrix}
0 & -1 & 0 \\
0 & 0 & 0 \\
0 & 0 & h_{-2}
\end{bmatrix}}$}[\atright, \solidarrow]
\enddc
\end{equation}
where
\[
K=Ker\Bigl (\begin{bmatrix}
-f & -d_{-1} & 0 \\
h_0 & 1 & d_{-2}
\end{bmatrix}\Bigr ).
\]

This is still an element in $\textup{Coh}^s(B,f)$. Indeed, from the short exact sequence of $B$-modules
\[
0 \rightarrow Ker(\begin{bmatrix} 1 & d_{-1} \end{bmatrix}) \rightarrow E_0\oplus E_{-1} \xrightarrow{\begin{bmatrix} 1 & d_{-1} \end{bmatrix}} E_0 \rightarrow 0,
\]
since $E_0$ and $E_{-1}$ are $B$-projective, we conclude that $Ker(\begin{bmatrix} 1 & d_{-1} \end{bmatrix})$ is $B$-projective too. As the complex $cone(\varphi)$ is exact in degree $-1$, we also have the following short exact sequence of $B$-modules:
\[
0 \rightarrow K \rightarrow E_{0} \oplus E_{-1} \oplus E_{-2} \xrightarrow{\begin{bmatrix} -f & -d_{-1} & 0 \\ h_0 & 1 & d_{-2} \end{bmatrix}}  \underbracket{Im\Bigl ( \begin{bmatrix} -f & -d_{-1} & 0 \\ h_0 & 1 & d_{-2} \end{bmatrix} \Bigr )}_{= Ker \Bigl ( \begin{bmatrix} 1 & d_{-1} \end{bmatrix} \Bigr )} \rightarrow 0.
\]
As $E_0$, $E_{-1}$, $E_{-2}$ and $Ker \Bigl ( \begin{bmatrix} 1 & d_{-1} \end{bmatrix} \Bigr )$ are projective $B$-modules, we conclude. 

Notice  that the $K(B,f)$-dg module (\ref{[-2n+1,-2]}) can be written as the cone of the following morphism of $K(B,f)$-dg modules:
\begin{equation}
\begindc{\commdiag}[18]
\obj(50,50)[1]{$\small{\underbracket{E_{-2n+3}}_{-2n+2}}$}
\obj(130,50)[2]{$\small{\underbracket{E_{-2n+3}}_{-2n+3}}$}
\mor(55,51)(125,51){$\small{f}$}[\atright, \solidarrow]
\mor(125,55)(55,55){$\small{1}$}[\atright, \solidarrow]
\obj(0,0)[3]{$\small{\underbracket{E_{-2n+1}}_{-2n+1}}$}
\obj(50,0)[4]{$\small{\underbracket{E_{-2n+4}\oplus E_{-2n+2}}_{-2n+2}}$}
\obj(130,0)[5]{$\small{\underbracket{E_{-2n+5}\oplus E_{-2n+4}\oplus E_{-2n+3}}_{-2n+3}}$}
\obj(177,0)[]{$\small{\dots}$}
\obj(200,0)[]{$\underbracket{K}_{-2}.$}
\mor(5,1)(35,1){$\small{\begin{bmatrix}
0 \\
d_{-2n+1}
\end{bmatrix}}$}[\atright, \solidarrow]
\mor(35,5)(5,5){$\small{\begin{bmatrix}
0 & h_{-2n+2}
\end{bmatrix}}$}[\atright, \solidarrow]
\mor(67,1)(100,1){$\small{\begin{bmatrix}
d_{-2n+4} & 0 \\
-f & 0 \\
h_{-2n+4} & d_{-2n+2}
\end{bmatrix}}$}[\atright, \solidarrow]
\mor(100,5)(67,5){$\small{\begin{bmatrix}
0 & -1 & 0 \\
0 & 0 & h_{-2n+3}
\end{bmatrix}}$}[\atright, \solidarrow]
\mor(160,1)(176,1){$$}[\atright, \solidarrow]
\mor(176,5)(160,5){$$}[\atright, \solidarrow]
\mor(179,1)(195,1){$$}[\atright, \solidarrow]
\mor(195,5)(179,5){$$}[\atright, \solidarrow]
\mor{1}{4}{$\small{\begin{bmatrix}
d_{-2n+3} \\
h_{-2n+3}
\end{bmatrix}}$}[\atright, \solidarrow]
\mor{2}{5}{$\small{\begin{bmatrix}
0 \\
-d_{-2n+3} \\
1
\end{bmatrix}}$}
\enddc
\end{equation}
As the source of this morphism is $\underbracket{E_{-2n+3}}_{-2n+3}\otimes_B K(B,f)$, and $E_{-2n+3}$ is a perfect $B$-module, this morphism provides an equivalence between $(E,d,h)$ and the target in $\sing{B,f}$. Moreover, we can iterate this procedure: the target of this morphism can be written as the cone of the following morphism:
\begin{equation}
\begindc{\commdiag}[18]
\obj(50,50)[1]{$\small{\underbracket{E_{-2n+4}}_{-2n+3}}$}
\obj(130,50)[2]{$\small{\underbracket{E_{-2n+4}}_{-2n+4}}$}
\mor(55,51)(125,51){$\small{f}$}[\atright, \solidarrow]
\mor(125,55)(55,55){$\small{1}$}[\atright, \solidarrow]
\obj(-30,0)[3]{$\small{\underbracket{E_{-2n+1}}_{-2n+1}}$}
\mor(-25,1)(-5,1){$\small{d_{-2n+1}}$}[\atright, \solidarrow]
\mor(-5,5)(-25,5){$\small{h_{-2n+2}}$}[\atright, \solidarrow]
\obj(0,0)[3]{$\small{\underbracket{E_{-2n+2}}_{-2n+2}}$}
\obj(50,0)[4]{$\small{\underbracket{E_{-2n+5}\oplus E_{-2n+3}}_{-2n+3}}$}
\obj(130,0)[5]{$\small{\underbracket{E_{-2n+6}\oplus E_{-2n+5}\oplus E_{-2n+4}}_{-2n+4}}$}
\obj(177,00)[]{$\small{\dots}$}
\obj(195,0)[]{$\underbracket{K}_{-2}.$}
\mor(5,1)(35,1){$\small{\begin{bmatrix}
0 \\
d_{-2n+2}
\end{bmatrix}}$}[\atright, \solidarrow]
\mor(35,5)(5,5){$\small{\begin{bmatrix}
0 & h_{-2n+3}
\end{bmatrix}}$}[\atright, \solidarrow]
\mor(67,1)(100,1){$\small{\begin{bmatrix}
d_{-2n+5} & 0 \\
-f & 0 \\
h_{-2n+5} & d_{-2n+3}
\end{bmatrix}}$}[\atright, \solidarrow]
\mor(100,5)(67,5){$\small{\begin{bmatrix}
0 & -1 & 0 \\
0 & 0 & h_{-2n+4}
\end{bmatrix}}$}[\atright, \solidarrow]
\mor(160,1)(176,1){$$}[\atright, \solidarrow]
\mor(176,5)(160,5){$$}[\atright, \solidarrow]
\mor(179,1)(195,1){$$}[\atright, \solidarrow]
\mor(195,5)(179,5){$$}[\atright, \solidarrow]
\mor{1}{4}{$\small{\begin{bmatrix}
d_{-2n+4} \\
h_{-2n+4}
\end{bmatrix}}$}[\atright, \solidarrow]
\mor{2}{5}{$\small{\begin{bmatrix}
0 \\
-d_{-2n+4} \\
1
\end{bmatrix}}$}
\enddc
\end{equation}
Once again, as the source of this morphism of $K(B,f)$-dg modules is perfect, we obtain an equivalence between $(E,d,h)$ and the target of the morphism in $\sing{B,f}$. Proceeding this way, we obtain a chain of equivalences between our initial $K(B,f)$-dg module and the following:
\begin{equation}\label{ci siamo quasi}
\begindc{\commdiag}[20]
\obj(30,0)[]{$\small{\underbracket{E_{-2n+1}}_{-2n+1}}$}
\obj(60,0)[]{$\small{\underbracket{E_{-2n+2}}_{-2n+2}}$}
\obj(90,0)[]{$\small{\dots}$}
\obj(120,0)[]{$\small{\underbracket{E_{-4}}_{-4}}$}
\obj(150,0)[]{$\small{\underbracket{E_{-1}\oplus E_{-3}}_{-3}}$}
\obj(195,0)[]{$\underbracket{K}_{-2}.$}
\mor(35,1)(55,1){$\small{d_{-2n+1}}$}[\atright, \solidarrow]
\mor(55,5)(35,5){$\small{h_{-2n+2}}$}[\atright, \solidarrow]
\mor(65,1)(85,1){$\small{d_{-2n+2}}$}[\atright, \solidarrow]
\mor(85,5)(65,5){$\small{
h_{-2n+3}}$}[\atright, \solidarrow]
\mor(95,1)(115,1){$\small{d_{-5}}$}[\atright, \solidarrow]
\mor(115,5)(95,5){$\small{h_{-4}}$}[\atright, \solidarrow]
\mor(122,1)(142,1){$\small{\begin{bmatrix}
0 \\
d_{-4}
\end{bmatrix}}$}[\atright, \solidarrow]
\mor(142,5)(122,5){$\small{\begin{bmatrix}
0 & h_{-3}
\end{bmatrix}}$}[\atright, \solidarrow]
\mor(158,1)(185,1){$\small{\begin{bmatrix}
d_{-1} & 0 \\
-f & 0 \\
h_{-1} & d_{-3}
\end{bmatrix}}$}[\atright, \solidarrow]
\mor(185,5)(158,5){$\small{\begin{bmatrix}
0 & -1 & 0 \\
0 & 0 & h_{-2}
\end{bmatrix}}$}[\atright, \solidarrow]
\enddc
\end{equation}
Notice that the $K(B,f)$-dg module (\ref{ci siamo quasi}) is concentrated in $2n-1$ degrees. We can therefore apply the induction hypothesis to conclude that this $K(B,f)$-dg module is  naturally equivalent, in $\sing{B,f}$, to 
\begin{equation}
    \begindc{\commdiag}[20]
    \obj(0,0)[1]{$\small{\bigoplus_{i\in \mathbb{Z}}E_{2i-1}}$}
    \obj(145,0)[2]{$\small{E_{-2n+2}\oplus E_{-2n+4} \oplus \dots \oplus E_{-4}\oplus K}.$}
    \mor(10,-1)(110,-1){$\small{\begin{bmatrix}
d_{-2n+1} & h_{-2n+3} & \dots & 0 & 0 \\
0 & d_{-2n+3} & \dots & 0 & 0 \\
 & & \vdots & & \\
 0 & 0 & \dots & h_{-3} & 0 \\
 0 & 0 & \dots & 0 & d_{-1} \\
 0 & 0 & \dots & 0 & -f \\
 0 & 0 & \dots & d_{-3} & h_{-1}
\end{bmatrix}}$}[\atright, \solidarrow]
\mor(110,1)(10,1){$\small{\begin{bmatrix}
h_{-2n+2} & 0 & \dots & 0 & 0 & 0 \\
d_{-2n+2} & h_{-2n+4} & \dots & 0 & 0 & 0 \\
 & & \vdots & &  &\\
 0 & 0 & \dots & 0 & 0 & h_{-2} \\
 0 & 0 & \dots & 0 & -1 & 0
\end{bmatrix}}$}[\atright, \solidarrow]
    \enddc
\end{equation}
Recall that $K=Ker\Bigl (\begin{bmatrix}
-f & -d_{-1} & 0 \\
h_0 & 1 & d_{-2}
\end{bmatrix}\Bigr )\subseteq E_0\oplus E_1 \oplus E_2$. We can finally consider the following morphism of $K(B,f)$-dg modules concentrated in degrees $-1$ and $0$
\begin{equation}\label{final step}
\begindc{\commdiag}[20]
\obj(0,60)[1]{$\small{\bigoplus_{i\in \mathbb{Z}}E_{2i-1}}$}
\obj(0,0)[2]{$\small{\bigoplus_{i\in \mathbb{Z}}E_{2i}}$}
\obj(120,60)[3]{$\small{\bigoplus_{i\in \mathbb{Z}}E_{2i-1}}$}
\obj(120,0)[4]{$\small{E_{-2n+2}\oplus E_{-2n+4} \oplus \dots \oplus E_{-4}\oplus K}.$}
\mor(-2,55)(-2,5){$\small{d+h}$}[\atright, \solidarrow]
\mor(2,5)(2,55){$\small{d+h}$}[\atright, \solidarrow]
\mor(118,55)(118,5){$\small{\begin{bmatrix}
d_{-2n+1} & h_{-2n+3} & \dots & 0 & 0 \\
0 & d_{-2n+3} & \dots & 0 & 0 \\
 & & \vdots & & \\
 0 & 0 & \dots & h_{-3} & 0 \\
 0 & 0 & \dots & 0 & d_{-1} \\
 0 & 0 & \dots & 0 & -f \\
 0 & 0 & \dots & d_{-3} & h_{-1}
\end{bmatrix}}$}[\atright, \solidarrow]
\mor(122,5)(122,55){$\small{\begin{bmatrix}
h_{-2n+2} & 0 & \dots & 0 & 0 & 0 \\
d_{-2n+2} & h_{-2n+4} & \dots & 0 & 0 & 0 \\
 & & \vdots & &  &\\
 0 & 0 & \dots & 0 & 0 & h_{-2} \\
 0 & 0 & \dots & 0 & -1 & 0
\end{bmatrix}}$}[\atright, \solidarrow]
\mor{2}{4}{$\small{\begin{bmatrix}
1 & 0 & \dots & 0 & 0 & 0 \\
0 & 1 & \dots & 0 & 0 & 0 \\
 & & \vdots &  &  & \\
 0 & 0 & \dots & 1 & 0 & 0 \\
 0 & 0 & \dots & 0 & 0 & 1 \\
 0 & 0 & \dots & 0 & -d_{-2} & -h_0 \\
  0 & 0 & \dots & 0 & 1 & 0 \\
\end{bmatrix}}$}[\atright,\solidarrow]
\mor{1}{3}{$id$}[\atright, \solidarrow]
\enddc
\end{equation}
It is not hard to check that morphism (\ref{final step}) is a quasi-isomorphism. Notice that the target of (\ref{final step}) is equivalent in $\sing{B,f}$ to the $K(B,f)$-dg module $(E,d,h)$.

Also notice that since all the passages above are functorial, the equivalence is natural in $(E,d,h)$. In particular, a morphism of $K(B,f)$-dg modules $\phi : (E,d,h)\rightarrow (E',d',h')$ corresponds, under this equivalence, to
\[
\begindc{\commdiag}
\obj(0,30)[1]{$\small{\bigoplus_{i\in \mathbb{Z}}E_{2i-1}}$}
\obj(40,30)[2]{$\small{\bigoplus_{i\in \mathbb{Z}}E_{2i}}$}
\obj(0,0)[3]{$\small{\bigoplus_{i\in \mathbb{Z}}E'_{2i-1}}$}
\obj(40,0)[4]{$\small{\bigoplus_{i\in \mathbb{Z}}E'_{2i}}.$}
\mor(10,30)(30,30){$d+h$}[\atright,\solidarrow]
\mor(30,35)(10,35){$d+h$}[\atright,\solidarrow]
\mor(10,0)(30,0){$d'+h'$}[\atright,\solidarrow]
\mor(30,5)(10,5){$d'+h'$}[\atright,\solidarrow]
\mor{1}{3}{$\oplus \phi_{2i-1}$}
\mor{2}{4}{$\oplus \phi_{2i}$}
\enddc
\]
\end{proof}
\begin{rmk}
The algorithm we have provided actually puts the final $K(B,f)$-dg module
\[
\begindc{\commdiag}[20]
\obj(0,100)[1]{$\small{E_{-2n+1}\oplus E_{-2n+3}\oplus \dots \oplus E_{-3}\oplus E_{-1}}$}
\obj(120,100)[2]{$\small{E_{-2n+2}\oplus E_{-2n+4} \oplus \dots \oplus E_{-2}\oplus E_{0}}$}
\mor(50,100)(70,100){$d+h$}[\atright, \solidarrow]
\mor(70,105)(50,105){$d+h$}[\atright, \solidarrow]
\enddc
\]
in degrees $-2n+1$ and $-2n+2$. However, thanks to Lemma \ref{lemma salto}, this is equivalent in $\sing{B,f}$ to the same dg module concentrated in degrees $-1$ and $0$.
\end{rmk}
\begin{cor}\label{cones Sing(B,f)}
Let $\phi_. :\bigl ( \dbarrow{E_{-1}}{E_0}{d}{h}\bigr )\rightarrow \bigl (\dbarrow{E'_{-1}}{E'_0}{d'}{h'}\bigr )$ be a closed morphism in $\textup{Coh}^s(B,f)$. Then $cone(\phi_.)$ is equivalent to $\dbarrow{\small{E'_{-1}\oplus E_0}}{\small{E'_0\oplus E_{-1}}}{\begin{bmatrix} d' & \phi_0 \\ 0 & -h\end{bmatrix}}{\begin{bmatrix} h' & \phi_{-1}\\ 0 & -d\end{bmatrix}}$ in $\sing{B,f}$.
\end{cor}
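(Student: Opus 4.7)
The plan is to combine \textbf{Lemma \ref{cone K(B,f) dg mod}} with \textbf{Theorem \ref{structure theorem n=1}}. First I would use \textbf{Lemma \ref{cone K(B,f) dg mod}} to write out $\mathrm{cone}(\phi_\bullet)$ explicitly. Since both the source and the target of $\phi_\bullet$ are concentrated in degrees $[-1,0]$, the cone is a $K(B,f)$-dg module concentrated in degrees $[-2,0]$, with components $E_{-1}$, $E_0\oplus E'_{-1}$, and $E'_0$; the differential $D$ and the $\varepsilon$-action $H$ are obtained from the lemma by zeroing out all entries involving $E_1$ or $E'_{-2}$.

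Next I would apply \textbf{Theorem \ref{structure theorem n=1}} to this three-term $K(B,f)$-dg module. The theorem yields an equivalence, in $\sing{B,f}$, between $\mathrm{cone}(\phi_\bullet)$ and the two-term $K(B,f)$-dg module
\[
\bigoplus_{i\in \mathbb{Z}} \mathrm{cone}(\phi_\bullet)_{2i-1} \rightleftarrows \bigoplus_{i\in \mathbb{Z}} \mathrm{cone}(\phi_\bullet)_{2i}
\]
whose two opposite differentials both equal $D+H$. In our case the odd sum collapses to $E_0\oplus E'_{-1}$ (only $i=0$ contributes) while the even sum collapses to $E_{-1}\oplus E'_0$ (contributions from $i=-1$ and $i=0$).

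The last step is a direct matrix computation. Reading off $D+H$ from \textbf{Lemma \ref{cone K(B,f) dg mod}} gives
\[
E_0\oplus E'_{-1} \xrightarrow{\begin{bmatrix} -h & 0 \\ \phi_0 & d' \end{bmatrix}} E_{-1}\oplus E'_0, \qquad E_{-1}\oplus E'_0 \xrightarrow{\begin{bmatrix} -d & 0 \\ \phi_{-1} & h' \end{bmatrix}} E_0\oplus E'_{-1}.
\]
Reordering the summands in each object so that the primed factor appears first (an isomorphism already at the level of $\textup{Coh}^s(B,f)$, hence \emph{a fortiori} in $\sing{B,f}$) turns these matrices into precisely the two displayed in the statement. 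The only genuinely delicate point is the bookkeeping of signs and permutations of direct summands; conceptually the corollary is an immediate consequence of applying the structure theorem to the explicit form of the cone provided by Lemma \ref{cone K(B,f) dg mod}.
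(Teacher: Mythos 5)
Your proposal is correct and follows exactly the route the paper intends: the paper's own proof is the one-line observation that the corollary follows from the explicit computation of $cone(\phi_.)$ in $\textup{Coh}^s(B,f)$ (via Lemma \ref{cone K(B,f) dg mod}) together with Theorem \ref{structure theorem n=1}, which is precisely the argument you carry out, with the matrix bookkeeping done correctly.
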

\begin{proof}
This is a straightforward consequence of the computation of $Cone(\phi)$ in $\textup{Coh}^s(B,f)$ and of the previous theorem. 
\end{proof}
\begin{cor}\label{Orlov equivalence}
The lax monoidal $\oo$-natural transformation
\begin{equation}
Orl^{-1,\otimes} : \sing{\bullet,\bullet} \rightarrow \mf{\bullet}{\bullet} : \lgm{1}^{op,\boxplus}\rightarrow \dgcatmo
\end{equation}
constructed in \cite[\S2.4]{brtv} defines a lax monoidal $\oo$-natural equivalence.
\end{cor}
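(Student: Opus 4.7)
My plan is to show that each component $Orl^{-1}_{(B,f)}:\sing{B,f}\to \mf{B}{f}$ is an equivalence in $\dgcatm$. Since the lax monoidal $\oo$-natural transformation $Orl^{-1,\otimes}$ is already established in \cite{brtv}, upgrading it to a lax monoidal $\oo$-natural equivalence requires precisely a pointwise equivalence verification.

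First, I would construct an explicit section $\iota_{(B,f)}:\mf{B}{f}\to \sing{B,f}$. Given a matrix factorization $\bigl(E_0\xrightarrow{p_0}E_1, E_1\xrightarrow{p_1}E_0\bigr)$, set $\iota_{(B,f)}$ to be the dg-functor sending it to the $K(B,f)$-dg module concentrated in degrees $[-1,0]$ with $E_1$ in degree $-1$, $E_0$ in degree $0$, differential $d_{-1}=p_1$ and $\varepsilon$-action $h_0=p_0$; the matrix factorization identities $p_1p_0=f=p_0p_1$ are exactly the $K(B,f)$-module relations. A direct unwinding of the formula defining $Orl^{-1}_{(B,f)}$ on a 2-term complex shows
\begin{equation*}
Orl^{-1}_{(B,f)}\circ \iota_{(B,f)}= id_{\mf{B}{f}}.
\end{equation*}

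The reverse composite is handled by \textbf{Theorem \ref{structure theorem n=1}}: for every $(E,d,h)\in \textup{Coh}^s(B,f)$ there is an equivalence in $\sing{B,f}$ between $(E,d,h)$ and $\iota_{(B,f)}\bigl(Orl^{-1}_{(B,f)}(E,d,h)\bigr)$, \emph{natural} in $(E,d,h)$. Passing through the dg-localization $\textup{Coh}^s(B,f)/\textup{Perf}^s(B,f)\simeq \sing{B,f}$ from \textbf{Lemma \ref{explicit model sing}}, this naturality statement produces a natural equivalence $id_{\sing{B,f}}\simeq \iota_{(B,f)}\circ Orl^{-1}_{(B,f)}$ of $\oo$-functors. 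Together with the previous identity this exhibits $\iota_{(B,f)}$ and $Orl^{-1}_{(B,f)}$ as inverse equivalences in $\dgcatm$, which is exactly the pointwise equivalence over $\lgm{1}^{\textup{aff},\boxplus}$.

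To conclude, I would extend from the affine case to all of $\lgm{1}^{op,\boxplus}$ using that $\sing{\bullet,\bullet}^{\otimes}$ and $\mf{\bullet}{\bullet}^{\otimes}$ are both defined on the non-affine category by left Kan extension from the affine site in \cite{brtv}; since Kan extensions preserve pointwise equivalences of $\oo$-natural transformations, the equivalence propagates to the whole category. The main obstacle I anticipate is a purely bookkeeping one: verifying that the naturality produced by \textbf{Theorem \ref{structure theorem n=1}} genuinely assembles into an $\oo$-natural transformation between the quotient $\oo$-functors (and not just a pointwise collection of equivalences), which requires tracking the construction of the chain of cones/sub-dg-modules appearing in that proof as functorial with respect to cocycle morphisms and base change along $(Spec(C),g)\to (Spec(B),f)$. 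This is essentially routine since every step in the proof is built out of kernels, mapping cones, and strict tensor products of $K(\bullet,\bullet)$-modules, all of which are strictly functorial in the arguments. No new verification on the lax-monoidal structure maps is needed: once each component is an equivalence in $\dgcatm$, the lax monoidal natural transformation automatically becomes a lax monoidal natural equivalence.
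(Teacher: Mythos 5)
Your proposal is correct and follows essentially the same route as the paper: the paper also constructs your $\iota_{(B,f)}$ (called $Orl$ there), observes that $[Orl^{-1}]\circ Orl=id$ by inspection and that $Orl\circ [Orl^{-1}]\simeq id$ by \textbf{Theorem \ref{structure theorem n=1}}, and reduces to the affine case by Kan extension. The only point you gloss over is that $\iota_{(B,f)}$ is not literally a dg-functor $\mf{B}{f}\rightarrow \sing{B,f}$ because of the $2$-periodicity of the hom-complexes of $\mf{B}{f}$; the paper sidesteps this (and your worry about assembling the naturality at the $\oo$-level) by checking the equivalence only on homotopy categories, where exactness of $Orl$ is supplied by \textbf{Corollary \ref{suspension in Sing}} and \textbf{Corollary \ref{cones Sing(B,f)}}.
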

\begin{proof}
By Kan extension and descent, it is sufficient to consider the affine case. Let $(Spec(B),f)\in \lgm{1}^{\textup{aff},op}$.
As the dg categories $\sing{B,f}$ and $\mf{B,f}$ are triangulated, it is sufficient to show that the induced functor
\[
[Orl^{-1}] : [\sing{B,f}] \rightarrow [\mf{B}{f}]
\]
\[
(E,d,h)\mapsto \bigl (\dbarrow{\bigoplus_{i\in \mathbb{Z}}E_{2i-1}}{\bigoplus_{i\in \mathbb{Z}}E_{2i}}{d+h}{d+h}\bigr )
\]
is an equivalence. Consider
\[
Orl : [\mf{B}{f}] \rightarrow [\sing{B,f}]
\] 
\[
\bigl (\dbarrow{E}{F}{p}{q} \bigr ) \mapsto \bigl (\dbarrow{\underbracket{E}_{-1}}{\underbracket{F}_{0}}{p}{q}\bigr ).
\]
This is an exact functor between triangulated categories by Corollary \ref{suspension in Sing} and by Corollary \ref{cones Sing(B,f)}.
It is clear that $[Orl^{-1}]\circ Orl$ is the identity functor. By Theorem \ref{structure theorem n=1}, $Orl\circ [Orl^{-1}]$ is equivalent to the identity functor too.
\end{proof}
\begin{rmk}
Notice that $Orl$ is a derived version of the "Cok" functor introduced in \cite{orl04}. Indeed, when $f$ is flat, the $K(B,f)$-dg module $coker(p)$ concentrated in degree $0$ is quasi-isomorphic to $\dbarrow{\underbracket{E}_{-1}}{\underbracket{F}_{0}}{p}{q}$.
\end{rmk}
\begin{rmk}
In \cite{efpo15}, the authors also introduced a coherent version of $\mf{B}{f}$. When $f$ is flat, they proved it to be equivalent to another category of singularities, defined as the Verdier quotient
\begin{equation}
\sing{B,f}_{coh}=\cohb{B/f}/\textup{\textbf{E}}
\end{equation}
where $\textup{\textbf{E}}$ is the thick subcategory of $\cohb{B/f}$ generated by the image of the pullback $\iota^*: \cohb{B}\rightarrow \cohb{B/f}$. 

Our proof of Theorem \ref{structure theorem n=1} also tells us that, for any $f$, all objects in this triangulated category can be represented by $K(B,f)$-dg modules concentrated in degrees $[-1,0]$. This can be used to show that the equivalence proven in \cite{efpo15} holds for any potential $f$, provided that we consider the derived fiber instead of $B/f$.
\end{rmk}

Massimo.Pippi@mathematik.uni-regensburg.de

Fakult\"at f\"ur Mathematik, Universit\"at Regensburg, 93040 Regensburg, Germany
\end{document}